\documentclass[a4paper]{amsart}

\usepackage{amsmath, amsthm, amssymb}
\usepackage[english]{babel}
\usepackage[utf8]{inputenc}

\usepackage[style=numeric, minnames=3,
            doi=false, url=false, isbn=false,
            firstinits=true,
            sortcites=true,
            backend=biber]{biblatex}      % For bibliographies

\usepackage{todonotes}                    % To do boxes
\usepackage[all]{xy}                      % Commutative diagrams
\usepackage[user]{zref}                   % For added reference functionality
\usepackage{chngcntr}                     % To redefine counter resetting
\usepackage{ifthen}                       % Provides if-then functions

\addbibresource{articles.bib}
\addbibresource{preprints.bib}

%---------------------------------------------------------------------
%--- Theorem environments
%--------------------------------------------------------------------

% \theoremstyle{plain}
% %\newtheorem{theorem}[subsection]{Theorem}
% \newtheorem{proposition}[subsection]{Proposition}
% \newtheorem{lemma}[subsection]{Lemma}
% \newtheorem{corollary}[subsection]{Corollary}
% \newtheorem{conjecture}[subsection]{Conjecture}

% \theoremstyle{definition}
% \newtheorem{definition}[subsection]{Definition}
% \newtheorem{remark}[subsection]{Remark}
% \newtheorem{example}[subsection]{Example}
% \newtheorem*{remark*}{Remark}
% \newtheorem*{example*}{Example}
% \newtheorem*{openquestion*}{Open Question}

\newtheoremstyle{convenientthm}%
  {3pt}%        Space above
  {3pt}%        Space below
  {\itshape}%   Body font
  {}%           Indent amount
  {\bfseries}%  Theorem head font
  {.}%          Punctuation after theorem head
  {.5em}%       Space after theorem head
  {\thmnumber{#2 }\thmname{#1}\thmnote{. #3}}%  Theorem head spec
\theoremstyle{convenientthm}

\newtheorem*{theorem*}{Theorem}
\newtheorem*{proposition*}{Proposition}
\newtheorem*{lemma*}{Lemma}

\newtheoremstyle{convenientplain}%
  {3pt}%        Space above
  {3pt}%        Space below
  {}%   Body font
  {}%           Indent amount
  {\bfseries}%  Theorem head font
  {.}%          Punctuation after theorem head
  {.5em}%       Space after theorem head
  {\thmnumber{#2 }\thmname{#1}\thmnote{. #3}}%  Theorem head spec
\theoremstyle{convenientplain}

\newtheorem*{example*}{Example}

%---------------------------------------------------------------------
%--- Math commands
%--------------------------------------------------------------------
\def\al{\alpha}
\def\be{\beta}
\def\ga{\gamma}

\def\ep{\varepsilon}

\def\et{\eta}
\def\th{\theta}

\def\ph{\varphi}

\def\ps{\psi}

\def\Ga{\Gamma}
\def\De{\Delta}

\def\inv{^{-1}}
\def\x{\times}
\def\p{\partial}

\def\R{{\mathbb R}}

\let\on=\operatorname

\let\mb=\mathbb
\let\mc=\mathcal
\let\mf=\mathfrak

\newcommand{\ud}{\,\mathrm{d}}

%---------------------------------------------------------------------
%--- Numbering system
%--------------------------------------------------------------------

% Section numbers without leading chapter
\renewcommand*\thesection{\arabic{section}}
% Continuous numbering of sections -- uncomment for book
% \counterwithout{section}{chapter}

% Equations reset at each subsection
\counterwithin{equation}{subsection}
% Equation numbers without subsection numbers
\renewcommand*\theequation{\arabic{equation}}

% Changes format of subsection counters
% http://tex.stackexchange.com/questions/219265
\makeatletter
\def\@seccntformat#1{%
  \protect\textup{%
    \protect\@secnumfont
    \expandafter\protect\csname format#1\endcsname % <--- added
    \csname the#1\endcsname
    \protect\@secnumpunct
  }%
}

% Lists and equations are numbered with same counter
\newcounter{keepeqno}
\newenvironment{enumeq}
 {\setcounter{keepeqno}{\value{equation}}%
  \begin{list}{(\theequation)}{\usecounter{equation}}%
  \setcounter{equation}{\value{keepeqno}}}
 {\end{list}}

% Adaptive references to equations: (1) or (1.1.1) for local/global
\makeatletter
\zref@newprop{section}{\thesection}
\zref@newprop{subsection}{\thesubsection}
\zref@newprop{equation}{\theequation}
\zref@addprops{main}{section,subsection,equation}

\renewcommand{\eqref}[1]
{%
  \ifthenelse%
  {%
    \equal{\zref@extract{#1}{subsection}}%
          {\thesubsection}%
  }{%
    (\zref@extract{#1}{equation})%
  }{%
    (\zref@extract{#1}{subsection}.\zref@extract{#1}{equation})%
  }%
}

% Redefine label command to add zref info
\let\oldlabel\label
\renewcommand*{\label}[1]{%
  \oldlabel{#1}%
  \zlabel{#1}%
}

% This one works with amsmath
\let\oldltxlabel\ltx@label
\renewcommand*{\ltx@label}[1]{%
  \oldltxlabel{#1}%
  \zlabel{#1}%
}

% Redefine ref command to act like standard eqref
\let\oldref\ref
\renewcommand*{\ref}[1]{(\oldref{#1})}

\makeatother

% User guide
% Use enumeq for lists
% Use \eqref for equations
% Use \ref for subsections and sections

% TO DO:
% Add hyperref

%---------------------------------------------------------------------
%--- The beginning
%--------------------------------------------------------------------

\begin{document}

\title{Riemannian Geometry for Shape Analysis and Computational Anatomy}
\author{Martins Bruveris}
\date{November 18, 2017}
\thanks{I would like to thank the organisers, in particular Sergey Kushnarev, for giving me the opportunity to give the course and Jakob M{\o}ller-Andersen for careful proofreading.}

\begin{abstract} Shape analysis and compuational anatomy both make use of sophisticated tools from infinite-dimensional differential manifolds and Riemannian geometry on spaces of functions. While comprehensive references for the mathematical foundations exist, it is sometimes difficult to gain an overview how differential geometry and functional analysis interact in a given problem. This paper aims to provide a roadmap to the unitiated to the world of infinite-dimensional Riemannian manifolds, spaces of mappings and Sobolev metrics: all tools used in computational anatomy and shape analysis.
%These are notes to four lectures given at the summer school ``Mathematics of Shapes'' in Singapore in July 2016. Their aim is to introduce infinite-dimensional Riemannian manifolds, discuss the differences between Banach and Fr\'echet manifolds and explain how spaces of functions can be modelled as infinite-dimensional manifolds.
\end{abstract}

\keywords{Shape analysis; computational anatomy; diffeomorphism group; Riemannian geometry}
\subjclass[2010]{Primary 58B20, 58D15} 

\maketitle

\setcounter{tocdepth}{1}
\tableofcontents

\section*{Introduction}

These lecture notes were written to supplement a course given at the summer school ``Mathematics of Shapes'' in Singapore in July 2016. The aim of the course was to show how the language of differential geometry can be used in shape analysis and computational anatomy. Of course four lectures are not enough to fully do justice to the subject. However four lectures are enough to give an introduction to infinite-dimensional differential geometry, to show how the infinite-dimensional world differs from the finite-dimensinal one and to point the interested reader to more in depth references. Infinite-dimensional differential geometry is treated in detail in \cite{Lang1999,Klingenberg1995,Michor1997} and more information about manifolds of maps can be found in the articles \cite{Bauer2014,Hamilton1982}. For shape analysis one may consult \cite{Srivastava2016} and for computational anatomy \cite{Younes2010}.

\section{Infinite-dimensional manifolds and functional analysis}

The first lecture gave an introduction to manifolds in infinite dimensions assuming familiarity with finite-dimensional differential geometry. Then it discussed the main differences between finite and infinite dimensions: loss of local compactness and no existence theorems for ODEs in Fr\'echet spaces. Finally it disussed Omori's theorem and what it means that the diffeomorphism group of a compact manifold cannot be modelled as a smooth Banach Lie group.

\subsection{Infinite-dimensional manifolds}
A \emph{smooth manifold} modelled on the topological vector space $E$ is a Hausdorff topological space $M$ together with a family of \emph{charts} $(u_\al, U_\al)_{\al \in A}$, such that
\begin{enumeq}
\item
$U_\al \subseteq M$ are open sets, $\bigcup_{\al \in A} U_\al = M$;
\item
$u_\al : U_\al \to u_\al(U_\al) \subseteq E$ are homeomorphisms onto open sets $u_\al(U_\al)$;
\item \label{eq:charts_smooth}
$u_\be \circ u_\al\inv : u_\al(U_\al \cap U_\be) \to u_\be(U_\al \cap U_\be)$ are $C^\infty$-smooth.
\end{enumeq}

In this definition it does not matter, whether $E$ is finite or infinite-dimensional. In fact, if $E$ is finite-dimensional, then $E = \R^n$ for some $n \in \mb N$ and we recover the definition of a finite-dimensional manifold.

\subsection{Choice of a modelling space}

There are several classes of infinite-di\-men\-sio\-nal vector spaces to choose from. With increasing generality our space $E$ can be a
\begin{enumeq}
\item Hilbert space;
\item Banach space;
\item Fr\'echet space;
\item convenient locally convex vector space.
\end{enumeq}
We will assume basic familiarity with Hilbert and Banach spaces. All topological vector spaces are assumed to be Hausdorff. A \emph{Fr\'echet space} is a locally convex topological vector space $X$, whose topology can be induced by a complete, translation-invariant metric, i.e. a metric $d :X\x X \to \R$ such that $d(x+h,y+h)=d(x,y)$. Alternatively a Fr\'echet space can be characterized as a Hausdorff topological space, whose topology may be induced by a countable family of seminorms $\|\cdot\|_n$, i.e., finite intersections of the sets $\{ y \,:\, \| x - y \|_n < \ep\}$ with some $x$, $n$, $\ep$ form a basis of the topology, and the topology is complete with respect to this family.

We will in these lectures ignore for the most part convenient vector spaces; a detailed exposition of manifolds modelled on these spaces can be found in \cite{Michor1997}; we mention only that convenient vector spaces are necessary to model spaces of compactly supported functions on noncompact manifolds.

Each of these classes is more restrictive than the next one. A \emph{Hilbert space} is a vector space with an inner product\footnote{We will ignore questions of completeness in this informal discussion.} $\langle\cdot,\cdot\rangle$. The inner product induces a norm via
\[
\| x\| = \sqrt{\langle x, x \rangle}\,.
\]
If we are just given a norm $\|\cdot\|$, then we have a \emph{Banach space}. A norm can be used to define a distance $d$ via
\[
d(x,y) = \| x - y\|\,
\]
If we have only a distance function, then the space is a \emph{Fr\'echet space}. It is in general not possible to go in the other direction.

\subsection{The Hilbert sphere}
A first example of an infinite-dimensional manifold is the unit sphere in a Hilbert space. Let $E$ be an infinite-dimensional Hilbert space. Then
\[
S = \{ x \in E \,:\, \| x \| = 1 \}
\]
is a smooth manifold. We can construct charts on $S$ in the following way: For $x_0 \in S$, define the subspace $E_{x_0} = \{ y \in E \,:\, \langle y, x_0 \rangle = 0\}$, which is isomorphic to $E$ itself. The chart map is given by
\[
u_{x_0} : x \mapsto x - \langle x, x_0 \rangle x_0\,,
\]
and is defined between the sets $U_{x_0} = \{ x \in S \,:\, \langle x, x_0 \rangle > 0\}$ and $u_{x_0}(U_{x_0}) = \{ y \in E_{x_0} \,:\, \| y \| < 1 \}$. Its inverse is
\[
u_{x_0}\inv : y \mapsto y + \sqrt{1 - \| y \|^2}\, x_0\,.
\]
We will omit the verification that chart changes are smooth maps.

Note that if $E$ is infinite-dimensional, then the sphere is not compact. To see this choose an orthonormal sequence $(e_n)_{n\in \mb N}$. Then $e_n \in S$, but the sequence does not contain a convergent subsequence, because $\| e_n - e_m \| = \sqrt 2$. Hence $S$ cannot be compact.

\subsection{The manifold $\on{Imm}(S^1,\R^d)$}

One of the simplest spaces of functions is the space of smooth, periodic, immersed curves,
\[
\on{Imm}(S^1,\R^d) = \{ c \in C^\infty(S^1,\R^d) \,:\, c'(\th) \neq 0,\, \forall \th \in S^1\}\,.
\]
The modelling space for the manifold is $C^\infty(S^1,\R^d)$, the space of smooth, periodic functions\footnote{A good introduction to the space of smooth functions on the circle can be found in \cite{Garrett2013web}.}. When we talk about manifolds, we have to specify, what topology we mean. On the space $C^\infty(S^1,\R^d)$ of smooth functions we consider the topology of uniform convergence in all derivatives, i.e.,
\[
f_n \to f \Leftrightarrow \lim_{n\to \infty}\| f_n^{(k)} - f^{(k)} \|_\infty = 0\quad \forall k \in \mb N\,,
\]
where $\| f\|_\infty = \sup_{\th \in S^1} |f(\th)|$. A basis of open sets is formed by sets of the form
\[
M(f,\ep,k) = \left\{ g \in C^\infty(S^1,\R^d) \,:\, \| g^{(k)} - f^{(k)} \|_\infty < \ep \right\}\,,
\]
with $\ep \in \R_{>0}$ and $k \in \mb N$.

\begin{proposition*}
$C^\infty(S^1,\R^d)$ is a reflexive, nuclear, separable Fr\'echet space.
\end{proposition*}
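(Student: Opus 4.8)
The plan is to verify the Fréchet property by hand and then to transport the three harder properties from a single model space, the space $s$ of rapidly decreasing sequences, via the Fourier transform. For the Fréchet property, observe first that the topology is induced by the countable family of seminorms $p_n(f)=\max_{0\le k\le n}\|f^{(k)}\|_\infty$, which separate points since $p_0(f)=0$ already forces $f=0$, and local convexity is automatic from the seminorm description. The only point needing an argument is completeness: if $(f_j)_j$ is Cauchy for every $p_n$, then each derivative sequence $(f_j^{(k)})_j$ is uniformly Cauchy, hence converges uniformly to some $g_k\in C(S^1,\R^d)$, and the elementary theorem that a uniform limit of functions whose derivatives converge uniformly is differentiable with the expected derivative shows, inductively, that $g_0\in C^\infty(S^1,\R^d)$ with $g_0^{(k)}=g_k$ and $f_j\to g_0$ in the stated topology.

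The heart of the proof is the Fourier isomorphism. Writing $S^1=\R/2\pi\mb Z$ and $\hat f(n)=\frac1{2\pi}\int_{S^1} f(\th)e^{-in\th}\ud\th\in\mb C^d$, integration by parts gives $\widehat{f^{(k)}}(n)=(in)^k\hat f(n)$, so that $\sup_n(1+|n|)^k|\hat f(n)|\le p_k(f)$ for all $k$; hence $\F\colon f\mapsto(\hat f(n))_{n\in\mb Z}$ maps $C^\infty(S^1,\R^d)$ continuously into the Fréchet space $s=s(\mb Z,\mb C^d)$ of rapidly decreasing sequences, topologised by the seminorms $q_k(a)=\sup_n(1+|n|)^k|a_n|$. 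Conversely, for $a\in s$ the series $\sum_n a_n e^{in\th}$ and all of its termwise derivatives converge absolutely and uniformly, because $\sum_n|n|^k|a_n|\le q_{k+2}(a)\sum_n(1+|n|)^{-2}<\infty$; this yields a smooth function and the estimate $p_k\le C\,q_{k+2}$, so $\F$ is a topological isomorphism of $C^\infty(S^1,\R^d)$ onto the closed subspace of $s$ cut out by the reality relation $a_{-n}=\overline{a_n}$. I expect the bookkeeping here to be the main obstacle: one must produce both continuity estimates with explicit constants and, in particular, confirm that the Fourier series of a smooth function converges back to it in the $C^\infty$-topology rather than merely pointwise, together with a clean description of the subspace cut out by the reality condition.

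It then remains to recall the standard properties of $s=s(\mb Z,\mb C^d)$, all of which are invariant under topological linear isomorphism and inherited by closed subspaces. Writing $s$ as the projective limit of the weighted Hilbert spaces $H_k=\{a:\sum_n(1+n^2)^k|a_n|^2<\infty\}$, the linking maps $H_{k+1}\hookrightarrow H_k$ are Hilbert--Schmidt because $\sum_n(1+n^2)^{-1}<\infty$; composing two consecutive ones yields nuclear maps, which is the standard criterion for $s$ to be \emph{nuclear}. A nuclear Fréchet space is Montel, hence \emph{reflexive}, and a closed subspace of a nuclear Fréchet space is again nuclear, hence reflexive. Finally $s$ is \emph{separable}, the finitely supported sequences with rational real and imaginary parts forming a countable dense set, and subspaces of separable metric spaces are separable; concretely this says that the trigonometric polynomials with rational coefficients are dense in $C^\infty(S^1,\R^d)$. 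Transporting all four properties back through $\F$ proves the proposition.
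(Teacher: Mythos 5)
The paper does not prove this proposition at all; it delegates everything to Pietsch's book, so your argument supplies what the text leaves to the references, and it is essentially the classical route one would find there: reduce $C^\infty(S^1,\R^d)$ to the sequence space $s$ via Fourier series and invoke the permanence properties of nuclearity, the Montel property and separability under topological isomorphism and passage to closed subspaces. The structure is sound: the completeness argument for the Fr\'echet property, the two-sided estimates $q_k \lesssim p_k$ and $p_k \lesssim q_{k+2}$ making $\F$ a topological isomorphism onto the closed (real-linear) subspace cut out by $a_{-n}=\overline{a_n}$, the Hilbert--Schmidt linking maps $H_{k+1}\hookrightarrow H_k$ giving nuclearity of $s$ by the Grothendieck--Pietsch criterion, and the chain nuclear Fr\'echet $\Rightarrow$ Montel $\Rightarrow$ reflexive are all correct. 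Two small points of bookkeeping: the inequality $\sup_n(1+|n|)^k|\hat f(n)|\le p_k(f)$ as written is off by a constant (one only gets $|n|^k|\hat f(n)|\le \|f^{(k)}\|_\infty$ directly, so a factor like $2^k$ must be absorbed), which you already flag and which is harmless for continuity; and the recovery of $f$ from its Fourier series in the $C^\infty$-topology, which you also flag, follows at once from your own convergence estimate applied to $a=\F f$ together with uniqueness of Fourier coefficients, so it is not really an obstacle. The one conceptual subtlety worth making explicit is that the reality condition defines a \emph{real}-linear, not complex-linear, closed subspace of $s(\mb Z,\mb C^d)$; this is fine because all four properties pass to closed real subspaces, but you could sidestep it entirely by complexifying, i.e.\ identifying $C^\infty(S^1,\mb C^d)$ with $s(\mb Z,\mb C^d)$ and then taking the closed real subspace of real-valued functions at the end.
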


See \cite[Sect.~6.2]{Pietsch1972} and \cite[Thm.~4.4.12]{Pietsch1972} for proofs. Having defined a topology, how does the set of immersions sit inside the set of all smooth functions?

\begin{lemma*}
$\on{Imm}(S^1,\R^d)$ is an open subset of $C^\infty(S^1,\R^d)$.
\end{lemma*}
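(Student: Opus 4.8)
The plan is to show that every immersion $c$ has a basic open neighbourhood contained in $\on{Imm}(S^1,\R^d)$. Since the defining condition ``$c'(\th)\neq 0$'' involves only the first derivative, the neighbourhood $M(c,\ep,1)$ will work for a suitably small $\ep$, which we extract from compactness of $S^1$.

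First I would fix $c \in \on{Imm}(S^1,\R^d)$ and observe that $\th \mapsto |c'(\th)|$ is a continuous, strictly positive function on the compact space $S^1$, hence attains a positive minimum; set $\ep := \min_{\th \in S^1} |c'(\th)| > 0$. Next, consider $M(c,\ep,1) = \{\, g \in C^\infty(S^1,\R^d) : \|g' - c'\|_\infty < \ep \,\}$, which is an open neighbourhood of $c$ in the Fr\'echet topology. For any $g \in M(c,\ep,1)$ and any $\th \in S^1$, the reverse triangle inequality gives
\[
|g'(\th)| \;\geq\; |c'(\th)| - |g'(\th) - c'(\th)| \;\geq\; \ep - \|g' - c'\|_\infty \;>\; 0\,,
\]
so $g'(\th) \neq 0$ for every $\th$, i.e.\ $g \in \on{Imm}(S^1,\R^d)$. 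Thus $M(c,\ep,1) \subseteq \on{Imm}(S^1,\R^d)$; as $c$ was arbitrary, $\on{Imm}(S^1,\R^d)$ is open.

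There is no genuine obstacle in this argument; the one point worth flagging is that compactness of the domain is essential, since it is exactly what upgrades the pointwise non-vanishing of $c'$ to a uniform lower bound. For immersions of a noncompact manifold the analogous statement fails in the compact-open $C^\infty$ topology, and one must instead use the Whitney topology. One could also phrase the same computation as the statement that $c \mapsto \min_\th |c'(\th)|$ is lower semicontinuous, which is what openness of the immersions really encodes.
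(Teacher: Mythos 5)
Your proof is correct and is essentially the paper's own argument: extract a uniform lower bound $\ep$ on $|c'|$ from compactness of $S^1$ and show the basic neighbourhood $M(c,\ep,1)$ (the paper uses $M(c,\ep/2,1)$, an immaterial difference) consists of immersions via the reverse triangle inequality. No gaps.
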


\begin{proof}
Given $f \in \on{Imm}(S^1,\R^d)$, let $\ep = \inf_{\th \in S^1} |f'(\th)|$. We have $\ep > 0$ since $S^1$ is compact. Now consider the neighborhood $M(f,\ep/2,1)$ defined above; for $g \in M(f,\ep/2,1)$ we can estimate
\[
|g'(\th)| \geq |f'(\th)| - \| g' - f'\|_\infty \geq \frac \ep 2 > 0\,,
\]
and thus $f \in M(f,\ep/2,1) \subseteq \on{Imm}(S^1,\R^d)$.
\end{proof}

See \cite[Thm.~2.1.1]{Hirsch1994} for a more general statement about the spaces $\on{Imm}(M,N)$. One can also show\footnote{This is true for $d \geq 2$; for $d=1$ the set $\on{Imm}(S^1,\R^d)$ is empty.} that $\on{Imm}(S^1,\R^d)$ is dense in $C^\infty(S^1,\R^d)$ \cite[Prop.~2.1.0]{Hirsch1994}. Because open subsets of vector spaces are the simplest examples of manifold, $\on{Imm}(S^1,\R^d)$ is a Fr\'echet manifold, modelled on the space $C^\infty(S^1,\R^d)$.

\subsection{The manifold $\on{Imm}_{C^n}(S^1,\R^d)$}
Instead of smooth curves, we could consider curves belonging to some other regularity class. Let $n \geq 1$ and
\[
\on{Imm}_{C^n}(S^1,\R^d) = \{ c \in C^n(S^1,\R^d) \,:\, c'(\th) \neq 0,\, \forall \th \in S^1\}\,,
\]
be the space of $C^n$-immersions. Again we need a topology on the space $C^n(S^1,\R^d)$. In this case
\[
\| f \|_{n,\infty} = \sup_{0 \leq k \leq n} \| f^{(k)}\|_\infty\,,
\]
is a norm making $(C^n(S^1,\R^d), \|\cdot\|_{n,\infty})$ into a Banach space.

\begin{lemma*}
For $n \geq 1$, $\on{Imm}_{C^n}(S^1,\R^d)$ is an open subset of $C^n(S^1,\R^d)$.
\end{lemma*}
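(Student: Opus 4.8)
The plan is to mimic verbatim the argument just used for the Fr\'echet manifold $\on{Imm}(S^1,\R^d)$, the only new ingredient being the elementary observation that the Banach norm $\|\cdot\|_{n,\infty}$ dominates the $C^1$-norm as soon as $n \geq 1$.

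First I would fix an arbitrary $f \in \on{Imm}_{C^n}(S^1,\R^d)$ and set $\ep = \inf_{\th \in S^1} |f'(\th)|$. Since $n \geq 1$, the derivative $f'$ is a continuous function on the compact space $S^1$, and it is nowhere zero by the definition of an immersion; hence the infimum is attained and $\ep > 0$. This is the one place where compactness of the domain is used.

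Next I would take the open ball $B(f,\ep/2) = \{ g \in C^n(S^1,\R^d) : \|g-f\|_{n,\infty} < \ep/2 \}$ and show $B(f,\ep/2) \subseteq \on{Imm}_{C^n}(S^1,\R^d)$. The key point is that, because $n \geq 1$, the definition of $\|\cdot\|_{n,\infty}$ as the maximum of the sup-norms of all derivatives up to order $n$ gives $\|g'-f'\|_\infty \leq \|g-f\|_{n,\infty}$. Hence for every $\th \in S^1$,
\[
|g'(\th)| \geq |f'(\th)| - \|g'-f'\|_\infty \geq \ep - \frac{\ep}{2} = \frac{\ep}{2} > 0\,,
\]
so $g'$ is nowhere vanishing and $g \in \on{Imm}_{C^n}(S^1,\R^d)$. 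Since $f$ was arbitrary, every point of $\on{Imm}_{C^n}(S^1,\R^d)$ is interior, and the set is open.

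I do not expect any genuine obstacle: the argument is essentially the same as in the smooth case, with the topology of uniform convergence in all derivatives replaced by the single norm $\|\cdot\|_{n,\infty}$, and the norm inequality $\|g'-f'\|_\infty \leq \|g-f\|_{n,\infty}$ is what makes the passage routine. (For a noncompact domain the statement would instead require the Whitney $C^n$-topology, but that issue does not arise here.)
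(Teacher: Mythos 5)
Your proposal is correct and follows the same route as the paper: reuse the argument from the smooth case, noting that only the first derivative is involved and that $\|g'-f'\|_\infty \leq \|g-f\|_{n,\infty}$ for $n\geq 1$, so the $\|\cdot\|_{n,\infty}$-ball of radius $\ep/2$ around $f$ consists of immersions. The paper phrases this by observing that the set $M(f,\ep/2,1)$ is open in $C^n(S^1,\R^d)$ and that the earlier proof only used $k=1$, which is the same argument.
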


\begin{proof}
It is easy to see that the sets $M(f,\ep,k)$ with $0 \leq k \leq n$---after we adapt the definition of $M(f,\ep,k)$ to include all functions $g \in C^n(S^1,\R^d)$ satisfying the inequality $\| g^{(k)} - f^{(k)}\| < \ep$---are open in $C^n(S^1,\R^d)$ and in the proof for $\on{Imm}(S^1,\R^d)$ we only used $k=1$; hence the same proof shows that $\on{Imm}_{C^n}(S^1,\R^d)$ is open in $C^n(S^1,\R^d)$.
\end{proof}

Thus $\on{Imm}_{C^n}(S^1,\R^d)$ is a Banach manifold modelled on the space $C^n(S^1,\R^d)$. There is a connection between the spaces of $C^n$-immersions and the space of smooth immersions. As sets we have
\[
\on{Imm}(S^1,\R^d) = \bigcap_{n \geq 1} \on{Imm}_{C^n}(S^1,\R^d)\,.
\]
However, more is true: we can consider the diagram
\[
C^\infty(S^1,\R^d) \subseteq \dots \subseteq C^n(S^1,\R^d) \subseteq C^{n-1}(S^1,\R^d)
\subseteq \dots \subseteq C^1(S^1,\R^d)\,,
\]
and topologically $C^\infty(S^1,\R^d) = \varprojlim_{n \to \infty} C^n(S^1,\R^d)$ is the projective limit of the spaces $C^n(S^1,\R^d)$ \cite{Garrett2013web}.

\subsection{Calculus in Banach spaces}
Having chosen a modelling space $E$, we look back at the definition of a manifold and see that \eqref{eq:charts_smooth} requires chart changes to be smooth maps. To a considerable extent multivariable calculus generalizes without problems from a finite-dimensional Euclidean space to Banach spaces, but not beyond. 

For example, if $X$, $Y$ are Banach spaces and $f : X \to Y$ a function, we can define the derivative $Df(x)$ of $f$ at $x \in X$ to be the linear map $A \in L(X,Y)$, such that
\[
\lim_{h \to 0} \frac{\|f(x + h) - f(x) - A.h\|_Y}{\| h\|_X} = 0\,.
\]
I want to emphasize in particular two theorems, that are valid in Banach spaces, but fail for Fr\'echet spaces: the existence theorem for ODEs and the inverse function theorem. First, the local existence theorem for ODEs with Lipschitz right hand sides.

\begin{theorem*}
Let $X$ be a Banach space, $U \subseteq X$ an open subset and $F : (a,b) \x U \to X$ a continuous function, that is Lipschitz continuous in the second variable, i.e.,
\[
\| F(t,x) - F(t,y)\|_X \leq C \|x - y \|_X\,,
\]
for some $C > 0$ and all $t \in (a,b)$, $x,y \in U$. Then, given $(t_0, x_0) \in (a,b) \x U$, there exists $x : (t_0-\ep, t_0+\ep) \to X$, such that
\[
\p_t x(t) = F(t, x(t))\,,\quad x(t_0) = x_0\,.
\]
\end{theorem*}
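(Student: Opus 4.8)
The plan is to rewrite the initial value problem as a fixed point equation for an integral operator and then apply the Banach contraction principle; this is the classical Picard--Lindel\"of argument, and it uses completeness of $X$ in an essential way. First I would observe that a continuous curve $x : (t_0 - \ep, t_0+\ep) \to U$ solves $\p_t x(t) = F(t,x(t))$, $x(t_0) = x_0$ if and only if it solves the integral equation
\[
x(t) = x_0 + \int_{t_0}^t F(s,x(s)) \ud s\,,
\]
where the right-hand side is the Riemann integral of a continuous $X$-valued curve; this vector-valued integral is defined exactly as in the scalar case, the only nontrivial point being that completeness of $X$ is needed for the Riemann sums to converge, and it satisfies the fundamental theorem of calculus.

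Next I would localise. Fix $r > 0$ with $\ol B(x_0,r) \subseteq U$ and $\de > 0$ with $[t_0 - \de, t_0+\de] \subseteq (a,b)$. The Lipschitz hypothesis gives the bound $\| F(t,x)\|_X \le \|F(t,x_0)\|_X + C r$ for $x \in \ol B(x_0,r)$, and since $t \mapsto F(t,x_0)$ is continuous on the compact interval $[t_0-\de,t_0+\de]$ it is bounded there; hence $m := \sup \{ \|F(t,x)\|_X : |t-t_0| \le \de,\ x \in \ol B(x_0,r)\}$ is finite. (Note that continuity of $F$ alone would not give this, as closed balls in $X$ need not be compact; here the Lipschitz assumption does the work.) Now for $\ep \le \de$ to be chosen, let $Y$ be the set of continuous maps $x : [t_0-\ep, t_0+\ep] \to \ol B(x_0,r)$, equipped with the metric $d(x,y) = \sup_{|t-t_0|\le\ep} \|x(t)-y(t)\|_X$. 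Since $\ol B(x_0,r)$ is closed in $X$ and $X$ is complete, $Y$ is a complete metric space, and the operator $\Phi(x)(t) = x_0 + \int_{t_0}^t F(s,x(s))\ud s$ is well defined on $Y$.

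Then I would check the two hypotheses of the contraction principle. For the self-mapping property, $\|\Phi(x)(t) - x_0\|_X \le |t-t_0|\, m \le \ep m$, so $\Phi(Y) \subseteq Y$ provided $\ep \le r/m$. For the contraction property, the Lipschitz estimate gives
\[
\|\Phi(x)(t) - \Phi(y)(t)\|_X \le \Big| \int_{t_0}^t \|F(s,x(s)) - F(s,y(s))\|_X \ud s \Big| \le C\ep\, d(x,y)\,,
\]
so $\Phi$ is a contraction once $\ep < 1/C$. Choosing $\ep < \min(\de, r/m, 1/C)$, Banach's fixed point theorem yields a unique $x \in Y$ with $\Phi(x) = x$; by the fundamental theorem of calculus $x$ is $C^1$ and solves the ODE with the prescribed initial value. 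The main point to be careful about is really the first step---the construction of the $X$-valued integral and its basic properties---together with keeping track of the three smallness conditions on $\ep$; and it is worth emphasising that both the contraction estimate and the completeness of $Y$ fail to have analogues in a general Fr\'echet space, which is exactly why the theorem does not extend beyond the Banach setting.
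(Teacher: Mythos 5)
Your proposal is correct and complete; it is the classical Picard--Lindel\"of contraction argument, which is exactly the intended (and standard) proof of this statement --- the paper itself states the theorem without proof, as a known result. You rightly flag the one genuinely infinite-dimensional subtlety, namely that boundedness of $F$ on $[t_0-\de,t_0+\de]\times \overline{B}(x_0,r)$ cannot come from compactness of the closed ball and must instead be extracted from the Lipschitz condition together with continuity of $t\mapsto F(t,x_0)$; the only cosmetic gap is the degenerate case $m=0$ in the condition $\ep\le r/m$, which you can dispose of by replacing $m$ with $m+1$.
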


In fact more can be said: the solution is as regular as the right hand side; if the right hand side depends smoothly on some parameters, then so does the solution; furthermore, one can estimate the length of the interval of existence. One can also get by with less regularity of $F(t,x)$ in the $t$-variable. This is used in the LDDMM framework~\cite{Younes2010}, where vector fields $F(t,x)$ are assumed to be only integrable in the $t$-variable but not necessarily continuous.

For $x \in X$, denote by $B_r(x) = \{ y \in X \,:\, \| y - x\|_X < r \}$ the open $r$-ball. The following is a version of the inverse function theorem.

\begin{theorem*}
Let $X$, $Y$ be Banach spaces, $U \subseteq X$ open, $f \in C^1(U,Y)$ and $Df(x_0)$ invertible for $x_0 \in U$. Then there exists $r > 0$ such that $f(B_r(x_0))$ is open in $Y$ and $f: B_r(x_0) \to f(B_r(x_0))$ is a diffeomorphism.
\end{theorem*}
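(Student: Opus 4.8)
The plan is to run the classical contraction-mapping proof of the inverse function theorem; it carries over essentially verbatim from $\R^n$ to Banach spaces, the only extra ingredients being two facts about bounded operators: a continuous linear bijection between Banach spaces has a bounded inverse (bounded inverse theorem), and the set of invertible operators is open in $L(X,Y)$ with $A\mapsto A\inv$ continuous on it (Neumann series). First I would normalize: since $Df(x_0)\in L(X,Y)$ is invertible, $Df(x_0)\inv$ is bounded, so replacing $f$ by $x\mapsto Df(x_0)\inv\bigl(f(x_0+x)-f(x_0)\bigr)$ we may assume $X=Y$, $x_0=0$, $f(0)=0$, and $Df(0)=\on{Id}$. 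Put $g(x)=x-f(x)$, a $C^1$ map with $Dg(0)=0$; by continuity of $Df$ choose $r>0$ with $\ol{B_r(0)}\subseteq U$ and $\|Dg(x)\|\le\tfrac12$ on $\ol{B_r(0)}$, so that the mean value inequality gives $\|g(a)-g(b)\|\le\tfrac12\|a-b\|$ there, and moreover $Df(x)=\on{Id}-Dg(x)$ is invertible with $\|Df(x)\inv\|\le 2$ for every $x\in B_r(0)$.

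Injectivity is then immediate from $\|f(a)-f(b)\|\ge\|a-b\|-\|g(a)-g(b)\|\ge\tfrac12\|a-b\|$. To solve $f(x)=y$, note that this is equivalent to $x$ being a fixed point of $\varphi_y(x):=y+g(x)$; if $\|y\|<r/2$ then $\varphi_y$ maps $\ol{B_r(0)}$ into itself and is a $\tfrac12$-contraction, hence has a unique fixed point by the Banach fixed point theorem, and this point lies in $B_r(0)$ since $\tfrac12\|x\|\le\|x\|-\|g(x)\|\le\|x-g(x)\|=\|y\|<r/2$. Writing $h$ for the inverse of $f|_{B_r(0)}$, the injectivity estimate upgrades to $\|h(y_1)-h(y_2)\|\le 2\|y_1-y_2\|$, so $h$ is Lipschitz, in particular continuous.

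For openness of $V:=f(B_r(0))$ I would repeat the fixed-point argument around an arbitrary point $x_1\in B_r(0)$, $y_1:=f(x_1)$: set $\psi_y(x):=x-Df(x_1)\inv\bigl(f(x)-y\bigr)$, whose derivative is independent of $y$ and vanishes at $x_1$; pick $\rho>0$ with $\ol{B_\rho(x_1)}\subseteq B_r(0)$ and $\|D\psi_y\|\le\tfrac12$ there, and then for $\|y-y_1\|$ small enough (using $\psi_y(x_1)=x_1+Df(x_1)\inv(y-y_1)$) the map $\psi_y$ sends $\ol{B_\rho(x_1)}$ into itself and is a contraction, producing $x\in B_r(0)$ with $f(x)=y$. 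Hence a ball around $y_1$ lies in $V$, so $V$ is open. Finally, to see $h\in C^1(V,X)$: fixing $y_1\in V$, $x_1=h(y_1)$, $T=Df(x_1)$, the relation $y-y_1=f(h(y))-f(h(y_1))=T\bigl(h(y)-h(y_1)\bigr)+o\bigl(\|h(y)-h(y_1)\|\bigr)$ together with $\|h(y)-h(y_1)\|\le 2\|y-y_1\|$ rearranges to $h(y)-h(y_1)-T\inv(y-y_1)=o(\|y-y_1\|)$, so $Dh(y_1)=Df(h(y_1))\inv$; since $h$ is continuous, $Df$ is continuous, and operator inversion is continuous, $Dh$ is continuous. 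Undoing the normalization — which only translated the domain and post-composed the target with the homeomorphism $Df(x_0)\inv$ — transfers all of this back to $f$ on $B_r(x_0)$, yielding the claimed $C^1$-diffeomorphism with open image.

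The part I expect to require the most care is not any single estimate but the openness step: in $\R^n$ one often shortcuts it using compactness of closed balls or the non-vanishing of $\det Df$, and neither is available here, so one really has to run the second Banach fixed point argument — and to have arranged, when shrinking $r$, that $Df$ stays invertible on all of $B_r(0)$, which is exactly where the Neumann-series openness of the group of invertible operators enters. Everything else is the finite-dimensional proof word for word.
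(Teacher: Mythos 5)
Your proof is correct and complete: it is the classical contraction-mapping proof of the Banach-space inverse function theorem, with the two genuinely infinite-dimensional ingredients (the bounded inverse theorem and the Neumann-series openness of the invertible operators) correctly identified and used, and with the openness and $C^1$-inverse steps carried out in full rather than via finite-dimensional shortcuts. The paper states this theorem without proof, so there is nothing to compare against; your argument is the standard one found in, e.g., Lang or Dieudonn\'e, and I see no gaps.
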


Both of these theorems are not valid in Fr\'echet spaces with counterexamples given in \ref{ss:frechet_counterexamples}. But even in Banach spaces life is not as easy as it was in finite dimensions. Two things are lost: local compactness and uniqueness of the topology. In fact the only locally compact vector spaces are finite-dimensional ones.

\begin{theorem*}[{\cite[Thm.~1.22]{Rudin1991}}]
Let $X$ be a topological vector space. If $X$ has an open set, whose closure is compact, then $X$ is finite-dimensional.
\end{theorem*}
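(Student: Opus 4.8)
The plan is to reduce to a bounded open neighbourhood of the origin, extract a finite-dimensional subspace by a covering argument, and then show that this subspace is all of $X$.

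First I would translate the hypothesis to the origin: if $O\subseteq X$ is open with $\ol O$ compact and $p\in O$, then $V:=O-p$ is an open neighbourhood of $0$ whose closure $\ol V=\ol O-p$ is compact. Recall the standard fact that in a topological vector space every compact set is bounded (cover it by the dilates $nU$ of a balanced neighbourhood $U$ of $0$ and pass to a finite subcover); hence $V$ is bounded, which means precisely that for every neighbourhood $W$ of $0$ there is $n$ with $2^{-n}V\subseteq W$.

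Next, cover the compact set $\ol V$ by the open translates $x+\tfrac12 V$, $x\in\ol V$, and take a finite subcover $\ol V\subseteq\bigcup_{i=1}^n(x_i+\tfrac12 V)$. Put $Y=\operatorname{span}\{x_1,\dots,x_n\}$, a finite-dimensional — hence closed, since $X$ is Hausdorff — linear subspace. Then $V\subseteq Y+\tfrac12 V$, and since $\tfrac12 Y=Y$ this iterates to $V\subseteq Y+2^{-k}V$ for every $k\ge 1$. Because the sets $2^{-k}V$ eventually lie inside every neighbourhood of $0$ (boundedness of $V$) and $Y$ is closed, one gets $V\subseteq\bigcap_{k\ge1}(Y+2^{-k}V)\subseteq\ol Y=Y$. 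Finally, every neighbourhood of $0$ in a topological vector space is absorbing, so $X=\bigcup_{t>0}tV\subseteq Y$, and therefore $X=Y$ is finite-dimensional.

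The step that requires the right idea is covering $\ol V$ by translates of the \emph{half-size} set $\tfrac12 V$ and the subsequent geometric iteration $V\subseteq Y+2^{-k}V$; the remaining ingredients (compact sets are bounded, finite-dimensional subspaces are closed, neighbourhoods of $0$ are absorbing) are routine facts about topological vector spaces that can be quoted from \cite{Rudin1991}.
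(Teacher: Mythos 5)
Your argument is correct and complete: the reduction to a bounded neighbourhood of $0$, the finite cover of $\ol V$ by translates of $\tfrac12 V$, the iteration $V\subseteq Y+2^{-k}V$, and the use of closedness of finite-dimensional subspaces together with absorbency of $V$ are exactly the classical proof of \cite[Thm.~1.22]{Rudin1991}. The paper itself gives no proof and simply cites that reference, so your proposal coincides with the intended (standard) argument.
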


In finite dimensions we do not have to choose, which topology to consider: there is only one $n$-dimensional vector space.

\begin{theorem*}[{\cite[Thm.~1.21]{Rudin1991}}]
Let $X$ be a topological vector space. If $\dim X = n$, then $X$ is homeomorphic to $\R^n$.
\end{theorem*}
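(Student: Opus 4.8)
The plan is to show that the obvious algebraic isomorphism is already a homeomorphism. Since $\dim X = n < \infty$, fix a basis $e_1,\dots,e_n$ of $X$ and define $f : \R^n \to X$ by $f(\al_1,\dots,\al_n) = \al_1 e_1 + \dots + \al_n e_n$. By elementary linear algebra $f$ is a linear bijection, so the entire content of the theorem is the topological claim that $f$ and $f\inv$ are both continuous. Continuity of $f$ is immediate: $f$ is assembled from the scalar multiplications $\al_i \mapsto \al_i e_i$ and from vector addition, both of which are continuous by the axioms of a topological vector space, and a finite composition of continuous maps is continuous. All the real work lies in showing that $f$ is an \emph{open} map, which is equivalent to continuity of $f\inv$.

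For openness, the key step exploits compactness of the sphere. Let $S = \{ x \in \R^n : |x| = 1 \}$ and $B = \{ x \in \R^n : |x| < 1 \}$. Since $f$ is continuous, $K := f(S)$ is compact, hence closed in $X$ — this is where the standing assumption that $X$ is Hausdorff is used — and since $f$ is injective with $f(0) = 0$ we have $0 \notin K$. Thus $X \setminus K$ is an open neighbourhood of $0$, so there is a balanced neighbourhood $V$ of $0$ (i.e.\ $tV \subseteq V$ whenever $|t| \leq 1$) with $V \cap K = \emptyset$; here one invokes the basic structural fact that every topological vector space has a local base at $0$ consisting of balanced sets. Then $f\inv(V)$ is a balanced subset of $\R^n$ disjoint from $S$, and a balanced set avoiding the unit sphere must be contained in $B$ — otherwise, scaling one of its points of norm $\geq 1$ by a suitable factor in $(0,1]$ would land on $S$. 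Hence $f\inv(V) \subseteq B$, so $V \subseteq f(B)$ and $f(B)$ is a neighbourhood of $0$. By linearity, $f(rB) = r\, f(B)$ is a neighbourhood of $0$ for every $r > 0$; since the balls $rB$ form a local base at $0$ in $\R^n$ and translations are homeomorphisms in both $\R^n$ and $X$, it follows that $f$ carries open sets to open sets. Combining, $f$ is a continuous open bijection, i.e.\ a homeomorphism.

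The main obstacle is the openness step, and within it the use of the compact sphere $S$: this is exactly the property that fails in infinite dimensions — compare the non-compactness of the Hilbert sphere noted earlier in this section — so it is precisely where finite-dimensionality is essential. The only other nontrivial input is the existence of balanced neighbourhoods of $0$, which can be taken as known or derived directly from the continuity of scalar multiplication; everything else is routine.
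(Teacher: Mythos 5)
Your argument is correct and complete. Note that the paper itself offers no proof of this statement---it is quoted verbatim from Rudin with a citation---so the comparison has to be with the cited source rather than with the text. Rudin's own proof of Theorem~1.21 proceeds by induction on $n$: the case $n=1$ and the general step both rest on the criterion that a linear functional on a topological vector space is continuous if and only if its null space is closed, with the induction hypothesis supplying closedness of the $(n-1)$-dimensional null spaces of the coordinate functionals of $f\inv$. Your route is the other standard one: continuity of $f$ from the vector space operations, then openness via compactness of the Euclidean sphere $S$, closedness of $f(S)$ in the Hausdorff space $X$, and a balanced neighbourhood of $0$ missing $f(S)$ whose preimage is therefore trapped inside the unit ball. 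All the individual steps check out, including the small points you flag explicitly (Hausdorffness to get $f(S)$ closed, surjectivity of $f$ to pass from $f\inv(V) \subseteq B$ to $V \subseteq f(B)$, and the scaling argument showing a balanced set disjoint from $S$ lies in $B$). What your approach buys is that it avoids the induction and the functional-analytic continuity criterion entirely, and it isolates exactly where finite-dimensionality enters---compactness of $S$---which dovetails with the paper's earlier observation that the unit sphere of an infinite-dimensional Hilbert space fails to be compact. The only ingredient you take on faith, the existence of a local base of balanced neighbourhoods of $0$, is standard and is proved in the same chapter of Rudin, so the argument is self-contained modulo that fact.
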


This is lost in infinite dimensions. There is some limited variant of the uniqueness for Banach spaces.

\begin{theorem*}[\cite{Tao2016web}]
Let $(X, \mc F)$ be a topological vector space. Then, up to equivalence of norms, there is at most one norm $\| \cdot\|$ one can place on $X$, such that $(X, \|\cdot\|)$ is a Banach space whose topology is at least as strong as $\mc F$. In particular, there is at most one topology stronger than $\mc F$ that comes from a Banach space norm.
\end{theorem*}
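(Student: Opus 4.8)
The plan is to reduce everything to the closed graph theorem. Suppose $\|\cdot\|_1$ and $\|\cdot\|_2$ are two norms on $X$, each making $(X,\|\cdot\|_i)$ into a Banach space whose norm topology $\mc F_i$ satisfies $\mc F \subseteq \mc F_i$. It suffices to show the identity map $\on{id}\colon (X,\|\cdot\|_1) \to (X,\|\cdot\|_2)$ is bounded; running the same argument with the two norms interchanged shows the inverse is bounded as well, so $\|\cdot\|_1$ and $\|\cdot\|_2$ are equivalent. The second assertion is then immediate: if two topologies, both at least as strong as $\mc F$, arise from Banach norms, those norms are equivalent by the first part and hence induce the same topology.

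To prove boundedness of $\on{id}$, I would appeal to the closed graph theorem, valid here since both $(X,\|\cdot\|_1)$ and $(X,\|\cdot\|_2)$ are Banach spaces and $\on{id}$ is linear. So the only thing to check is that the graph $\Gamma = \{(x,x) : x \in X\}$ is closed in $(X,\|\cdot\|_1) \times (X,\|\cdot\|_2)$. Both factors are metrizable, so it is enough to take a sequence $(x_n,x_n) \to (x,y)$ in the product, i.e.\ $\|x_n - x\|_1 \to 0$ and $\|x_n - y\|_2 \to 0$, and show $x = y$. Since $\mc F \subseteq \mc F_1$, convergence in $\|\cdot\|_1$ forces $x_n \to x$ in $(X,\mc F)$; likewise $\mc F \subseteq \mc F_2$ gives $x_n \to y$ in $(X,\mc F)$. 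As $(X,\mc F)$ is Hausdorff, sequential limits are unique, so $x = y$, and $(x,y) \in \Gamma$. Thus $\Gamma$ is closed, $\on{id}$ is continuous, and there is $C>0$ with $\|x\|_2 \le C\|x\|_1$ for all $x \in X$.

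This argument is short and I do not expect a real obstacle; the two points that deserve attention are, first, that ``topology at least as strong as $\mc F$'' is precisely the statement that the identity into $(X,\mc F)$ is continuous, so that norm convergence passes to $\mc F$-convergence, and second, that Hausdorffness of $\mc F$ is exactly what makes the two $\mc F$-limits of a single sequence coincide --- without it the theorem genuinely fails. If one prefers to sidestep the closed graph theorem, an equivalent route is to observe that $\|\cdot\|_1 + \|\cdot\|_2$ is again a complete norm on $X$ (a sequence Cauchy for the sum is Cauchy for each $\|\cdot\|_i$, hence converges to some $x_i$ in $\|\cdot\|_i$, and $x_1 = x_2$ by the same Hausdorff argument in $\mc F$), and then apply the open mapping theorem to the continuous bijection $(X,\|\cdot\|_1+\|\cdot\|_2) \to (X,\|\cdot\|_1)$ to conclude $\|\cdot\|_1 + \|\cdot\|_2 \le C\|\cdot\|_1$, hence $\|\cdot\|_2 \le C\|\cdot\|_1$.
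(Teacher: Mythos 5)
Your argument is correct, and it is essentially the argument in the cited source (the paper itself states this theorem without proof, referring to Tao's note, which likewise deduces it from the closed graph theorem applied to the identity map, using Hausdorffness of $\mc F$ to identify the two limits). Both of your key observations are the right ones: ``at least as strong as $\mc F$'' is exactly continuity of the identity into $(X,\mc F)$, and Hausdorffness of $\mc F$ (which the paper assumes globally for all topological vector spaces) is what forces $x=y$ in the closed-graph step. The alternative route via completeness of $\|\cdot\|_1+\|\cdot\|_2$ and the open mapping theorem is also valid; nothing further is needed.
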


\subsection{Counterexamples in Fr\'echet spaces}
\label{ss:frechet_counterexamples}

For an example, that differential equations may not have solutions in Fr\'echet spaces, consider $C(\R)$, the space of continuous functions, with the compact open topology \cite[Sect.~2.1]{Hirsch1994}. A basis for the compact open topology consists of sets
\[
M(K,V) = \{ f \in C(\R) \,:\, f(K) \subseteq V \}\,,
\]
where $K\subseteq \R$ is compact and $V \subseteq \R$ is open. The differential equation
\[
\p_t f = f^2\,,\quad f(0,x) = x\,,
\]
has a smooth right hand side, but admits no solution in $C(\R)$: if we look at the pointwise solution, we have
\[
f(t,x) = \frac{x}{1-tx}\,,
\]
provided $tx < 1$. Hence for no $t \neq 0$ do we obtain a function, defined on all of $\R$.

For an example, that the inverse function theorem fails for Fr\'echet spaces,  consider the map
\[
F : C(\R) \to C(\R)\,,\quad f \mapsto e^f\,,
\]
where $C(\R)$ carries the compact open topology. Its derivative is $DF(f).h = e^f.h$, which is invertible everywhere. The image of $F$ consists of everywhere positive functions,
\[
F(C(\R)) = \{ f \in C(\R) \,:\, f > 0 \}\,,
\]
but this set is not open in the compact open topology, because the topology is not strong enough to control the behaviour towards infinity.

\subsection{Banach and Fr\'echet manifolds}

Why are Banach or even Hilbert manifolds not enough? One of the important objects in computational anatomy and shape analysis is the diffeomorphism group
\[
\on{Diff}(M) = \{ \ph \in C^\infty(M,M) \,:\, \ph \text{ bijective},\, \ph\inv \in C^\infty(M,M) \}\,,
\]
of a compact manifold. We will see later that $\on{Diff}(M)$ is a smooth Fr\'echet--Lie group. What about a Banach manifold version of the diffeomorphism group? If $n \geq 1$, then one can consider
\[
\on{Diff}_{C^n}(M) = \{ \ph \in C^n(M,M) \,:\, \ph \text{ bijective},\, \ph\inv \in C^n(M,M)\}\,,
\]
the group of $C^n$-diffeomorphisms. The space $\on{Diff}_{C^n}(M)$ is a Banach manifold and a topological group, but not a Lie group. What went wrong? The group operations are continuous, but not differentiable. Fix $\ph \in \on{Diff}_{C^n}(M)$ and consider the map
\[
L_\ph: \on{Diff}_{C^n}(M) \to \on{Diff}_{C^n}(M)\,,\quad \ps \mapsto \ph \circ \ps\,;
\]
its derivative should be
\[
T_\ps L_\ph.h = (D\ph \circ \ps).h\,,
\]
with $T_\ps L_\ph$ denoting the derivative of the map $L_\ps$ and $D\ph$ denotes the derivative of the diffeomorphism $\ph$; the former is a map between infinite-dimensional manifolds, while the latter maps $M$ to itself. To see this, consider a one-parameter variation $\ps(t,x)$, such that $\ps(0,x) = \ps(x)$ and $\p_t \ps(t,x)|_{t=0} = h(x)$, and compute
\[
\p_t \ph(\ps(t,x))|_{t=0} = D\ph(\ps(x)).h(x)\,.
\]
We see that in general $T_\ps L_\ph.h$ lies only in $C^{n-1}$. However, if composition were to be a differentiable operator, $T_\ph L_\ps$ would have to map into $C^n$-functions.

There seems to be a trade off involved: we can consider smooth functions, in which case the diffeomorphism group is a Lie group, but can be modelled only on a Fr\'echet space; or we look at functions with finite regularity, but then composition ceases to be differentiable. There is a theorem by Omori \cite{Omori1978} stating that this choice cannot be avoided.

\begin{theorem*}[Omori, 1978 \cite{Omori1978}]
If a connected Banach--Lie group $G$ acts effectively, transitively and smoothly on a compact manifold, then $G$ must be a finite-dimensional Lie group.
\end{theorem*}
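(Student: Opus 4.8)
The plan is to pass from the group $G$ to its Lie algebra $\g$, realise $\g$ concretely as an algebra of vector fields on $M$, and then combine a soft functional-analytic argument with one genuinely geometric estimate.

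\emph{Reduction to vector fields.} Fix $x \in M$. Transitivity identifies $M$ with the homogeneous space $G/G_x$ and makes the orbit map $G \to M$, $g \mapsto g\cdot x$, a submersion; differentiating the action yields the fundamental vector field map $\g \to \X(M)$, $X \mapsto \ze_X$, where $\ze_X(y) = \p_t|_{t=0}\exp(tX)\cdot y$. Submersivity says each evaluation $\on{ev}_y : \g \to T_yM$, $X \mapsto \ze_X(y)$, is surjective. Effectiveness makes $X \mapsto \ze_X$ injective: if $\ze_X = 0$ then $t \mapsto \exp(tX)\cdot y$ is constant for every $y$, so $\exp(tX) = \on{id}_M$ for all $t$, hence $X = 0$. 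Thus $\g$ is a Banach--Lie algebra of smooth vector fields on the compact manifold $M$ whose values span every tangent space. Moreover a connected Banach--Lie group with finite-dimensional Lie algebra is a finite-dimensional Lie group: the exponential map is a local diffeomorphism at $0$ by the inverse function theorem in Banach spaces, so a chart built from $\exp$ exhibits $G$ as a finite-dimensional manifold. Hence it suffices to prove $\dim\g < \infty$.

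\emph{The soft half.} Smoothness of the action (together with compactness of $M$) shows that for each $k$ the linear map $\g \to C^k(M,TM)$, $X \mapsto \ze_X$, is bounded; once continuity into $C^0(M,TM)$ is noted this also follows from the closed graph theorem. Hence $\g$ includes continuously into $C^\infty(M,TM) = \varprojlim_k C^k(M,TM)$, which---like $C^\infty(S^1,\R^d)$ above---is a nuclear, in particular Montel, Fr\'echet space. \emph{If} this inclusion is a topological embedding we are finished: the unit ball of $\g$ is then a neighbourhood of the origin that, being bounded in the Montel space $C^\infty(M,TM)$, is relatively compact there, hence has compact closure in $\g$; but a topological vector space possessing an open set with compact closure is finite-dimensional (the theorem of Rudin recalled above), so $\dim\g < \infty$.

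\emph{The hard half.} What remains is to show that the norm of $\g$ is dominated by a fixed $C^m$-norm on $M$---equivalently that $\g \hookrightarrow C^\infty(M,TM)$ is a topological embedding---and this is the step I expect to be the main obstacle, since it must use transitivity in an essential, quantitative way rather than as a soft hypothesis. By compactness of $M$ and surjectivity of the $\on{ev}_y$ one may fix $X_1,\dots,X_N \in \g$ whose values span $T_yM$ at every $y$; each $\ad X_i$ is a bounded operator on the Banach--Lie algebra $\g$, and because a spanning family of vector fields reconstructs all directional derivatives, the iterated brackets $\ad X_{i_1}\cdots\ad X_{i_\ell}Y$ together with pointwise evaluation control the $C^m$-jet of $\ze_Y$. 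Converting this heuristic into the required comparison of norms, uniformly over the compact $M$, is the technical core of Omori's argument. An equivalent, more algebraic formulation passes to the isotropy subalgebra $\mf h = \on{Lie}(G_x) \subseteq \g$, which has finite codimension $\dim M$ by transitivity and contains no nonzero ideal of $\g$ by effectiveness (an ideal $\mf n \subseteq \mf h$ would have $\Ad$-invariant, hence identically vanishing, fundamental fields, so $\mf n = 0$), and then studies the descending chain of closed subalgebras $\mf h^0 = \mf h$, $\mf h^{k+1} = \{X \in \mf h^k : [\g,X] \subseteq \mf h^k\}$: its successive quotients are finite-dimensional and its intersection is the largest ideal of $\g$ inside $\mf h$, namely $\{0\}$, so one must prove the chain stabilises after finitely many steps. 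Neither version follows from the functional analysis of the soft half, and bridging this gap---where the compactness of $M$ is genuinely needed---is where the real work lies.
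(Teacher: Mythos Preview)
The paper does not prove this theorem. It is stated with a citation to Omori's original article and then used to motivate the discussion of why one cannot have $\on{Diff}(M)$ as a Banach--Lie group; no argument for the theorem itself appears anywhere in the text. So there is no ``paper's own proof'' to compare your attempt against.

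On its own merits: your outline correctly identifies the standard architecture of the argument---realise $\g$ as a Banach--Lie algebra of vector fields on the compact $M$, embed it continuously into the nuclear Fr\'echet space $C^\infty(M,TM)$, and observe that if this embedding is topological then the unit ball of $\g$ is a neighbourhood with compact closure, forcing $\dim\g<\infty$ by the Riesz-type theorem quoted in the notes. That ``soft half'' is fine. But by your own explicit admission the proposal is not a proof: the ``hard half''---that the Banach norm on $\g$ is controlled by some fixed $C^m$-norm, equivalently that the chain $\mf h^0 \supseteq \mf h^1 \supseteq \cdots$ of higher isotropy subalgebras stabilises---is left as a heuristic with no actual estimate or termination argument. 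This is exactly where Omori's work lies, and nothing in your sketch bridges it; the iterated-bracket control you gesture at does not obviously give a two-sided comparison of norms, and the filtration argument needs a reason (beyond finite codimension of each step) for the descent to stop. So as a proof the proposal has a genuine, acknowledged gap; as a roadmap to the literature it is accurate.
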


A smooth action of a Lie group $G$ on an manifold $M$ is a smooth map $G \x M \to M$, written as $(g,x) \mapsto g.x$, satisfying the identities $e.x = x$ and $g.(h.x) = (gh).x$, for all $g,h \in G$ and $x \in M$ with $e \in G$ the identity element \cite[Sect.~6]{Michor2008b}. The action is called \emph{transitive}, if for any two points $x,y \in M$ there exists $g \in G$ with $g.x = y$; the action is called \emph{effective}, if
\[
g.x = h.x \text{ for all } x \in M \Rightarrow g = h\,.
\]
In other words, an effective action allows us to distinguish group elements based on their action on the space. 

The action of the diffeomorphism group on the base manifold is given by $\ph.x = \ph(x)$; it is clearly effective, since $\ph(x) = \ps(x)$ for all $x \in M$ implies $\ph = \ps$ as functions. The diffeomorphism group also acts transitively \cite[(43.20)]{Michor1997}.

Therefore Omori's theorem requires us to make a choice: either our diffeomorphism group is not a Banach manifold or it is not a smoothly acting Lie group, i.e., the group operations or the action on the manifold are not smooth. Choosing to work with the group $\on{Diff}(M)$ of smooth diffeomorphisms leads to the Fr\'echet manifold setting, where $\on{Diff}(M)$ is a Lie group; it is easier to do geometry, since more operations are differentiable, but establishing analytic results is more challenging. The other choice is a group like $\on{Diff}_{C^n}(M)$ of diffeomorphisms with finitely many derivatives. This group is a Banach manifold and hence one has multiple tools available to prove existence results; however, because $\on{Diff}_{C^n}(M)$ is not a Lie group, it is a less rich geometric setting. One can use the intuition and the language of differential geometry, but not necessarily its tools.

\section{Riemannian geometry in infinite dimensions}

Most examples of spaces of maps discussed in the first lecture were open subsets of vector spaces. For example, $\on{Imm}(S^1,\R^d)$ is an open subset of $C^\infty(S^1,\R^d)$ and as such its manifold structure is trivial. The second lecture begun by discussing how to define a manifold structure on nonlinear spaces of functions such as $C^\infty(M,N)$. It then proceeded to consider Riemannian metrics on infinite-dimensional manifolds with special emphasis on weak Riemannian metrics. These are Riemannian metrics on Banach and Fr\'echet manifolds.

\subsection{The manifold $C^\infty(M,N)$}

We assume that $M$ is a compact (hence finite-dimensional) manifold without boundary, while $N$ can be a noncompact (with some mild restrictions even an infinite-dimensional) manifold, also without boundary. How do we model $C^\infty(M,N)$ as an infinite-dimensinal manifold? A local deformation $h$ of a function $f \in C^\infty(M,N)$ is a vector field in $N$ along $M$,
\[
\begin{aligned} \xymatrix{
& TN \ar[d]^{\pi_N} \\
M \ar[r]^f \ar[ur]^h & N
} \end{aligned}\qquad
h(x) \in T_{f(x)}N\,.
\]
%\todomartins{Insert picture here.}
The collection of all these deformations is
\[
\Ga(f^\ast TN) = \left\{ h \in C^\infty(M,TN) \,:\, \pi_N \circ h = f \right\}\,,
\]
the space of sections of the pullback bundle $f^\ast TN$. This is a linear space.

Choose a Riemannian metric $(N, \bar g)$ on $N$. The metric gives rise to an exponential map
\[
\pi_N \x \on{exp} : TN \supseteq U \to N \x N\,,\quad
h_x \mapsto (y, \on{exp}_y(h_y))\,,
\]
where $h_y \in T_y N$ and $\on{exp}_y$ is the exponential map of $\bar g$ at $y \in N$. It is defined on a neighborhood of the zero section, and if $U$ is suitably small, it is a diffeomorphism onto its image. Denote the image by $V = \pi_N \x \on{exp}(U)$. Then any function $g \in C^\infty(M,N)$, that is close enough to $f$ in the sense that $(f(x), g(x)) \in V$ for all $x \in M$, can be represented by
\[
u_f(g): x \mapsto \left(\pi_N \x \on{exp}\right)\inv(f(x), g(x))\,,\quad
u_f(g) \in \Ga(f^\ast TN)\,.
\]
This means that we have found around each function $f$ an open neighborhood
\[
\mc V_f = \{ g \in C^\infty(M,N) \,:\, (f(x), g(x)) \in V \text{ for all }x \in M\}\,,
\]
and $\mc V_f$ can be mapped bijectively onto the open subset
\[
\mc U_f = \{ h \in \Ga(f^\ast TN) \,:\, h(x) \in U \text{ for all } x\in M \}\,,
\]
of the vector space $\Ga(f^\ast TN)$.

Several things have been left unsaid: one has to check that this map is indeed continuous and that its inverse is continuous as well; one also has to check that the chart change maps $h \mapsto u_f \circ u_g\inv(h)$ are smooth as functions of the vector fields $h$.

One can use the same method to construct charts on the spaces $C^n(M,N)$ of functions with finitely many derivatives. The main problem is that if $f : M \to N$ is not smooth, then the pullback bundle $f^\ast TN$ is not a smooth (finite-dimensional) manifold any more. To overcome this difficulty we can use that $C^\infty(M,N)$ is dense in $C^n(M,N)$ and construct charts around all smooth $f$. We then have to check, that these charts indeed cover all of $C^n(M,N)$\footnote{Density of $C^\infty(M,N)$ alone is not sufficient. For example $\mb Q$ is dense in $\R$, but we can choose a small interval around each rational number in such a way that all the intervals still miss $\sqrt{2}$.}.

\subsection{Strong and weak Riemannian manifolds}

Let $M$ be a manifold modelled on a vector space $E$. A \emph{weak Riemannian metric} $G$ is a smooth map
\[
G : TM \x_M TM \to \R\,,
\]
satisfying
\begin{enumeq}
\item
$G_x(\cdot,\cdot)$ is bilinear for all $x \in M$;
\item
$G_x(h,h) \geq 0$ for all $h \in T_x M$ with equality only for $h=0$.
\end{enumeq}
This implies that the associated map
\[
\check G : TM \to T^\ast M\,,\quad \left\langle \check G_x(h), k \right\rangle = G_x(h,k)\,,
\]
is injective. In finite dimensions it would follow by counting dimensions that $\check G$ is bijective and hence an isomorphism. In infinite dimensions this is not longer the case.

\begin{example*}
Consider the space of smooth curves $\on{Imm}(S^1,\R^d)$ with the $\on{Diff}(S^1$-invariant $L^2$-metric
\[
G_c(h,k) = \int_{S^1} \langle h(\th), k(\th) \rangle |c'| \ud \th\,.
\]
Then $\check G_c(h) = h.|c'|$ and the image of $T_c \on{Imm}(S^1,\R^d) = C^\infty(S^1,\R^d)$ under $\check G_c$ is again $C^\infty(S^1,\R^d)$, while the dual space $T^\ast_c \on{Imm}(S^1,\R^d) = \mc D'(S^1)^d$ is the space of $\R^d$-valued distributions.
\end{example*}

This means that in infinite dimensions we have to distinguish between two different notions of Riemannian metrics. A \emph{strong Riemannian metric} is required to additionally satisfy
\begin{enumeq}
\item
The topology of the inner product space $(T_xM, G_x(\cdot,\cdot))$ coincides with the topology $T_x M$ inherits from the manifold $M$.
% $\check G_x : T_x M \to T^\ast_x M$ is an isomorphism for all $x \in M$.
\end{enumeq}
A strong Riemannian metric implies that $T_x M$ and hence the modelling space of $M$ is a Hilbert space. See \cite{Klingenberg1995,Lang1999} for the theory of strong Riemannian manifolds.

\begin{example*}
Let $H$ be a Hilbert space. Then the Hilbert sphere
\[
S = \{ x \in H \,:\, \| x \|=1 \}\,,
\]
with the induced Riemannian metric $G_x(h,k) = \langle h, k \rangle$ for $h,k \in T_x S = \{ h \in H \,:\, \langle h,x \rangle = 0\}$ is a strong Riemannian manifold.
\end{example*}

Why do we consider weak Riemannian manifolds? There are two reasons: the only strong Riemannian manifolds are Hilbert manifolds; when we want to work with the space of smooth functions, any Riemannian metric on it will be a weak one; the other reason is that some Riemannian metrics, that are important in applications (the $L^2$-metric on the diffeomorphism group for example) cannot be made into strong Riemannian metrics.

\subsection{Levi-Civita covariant derivative}

After defining the Riemannian metric, one of the next objects to consider is the covariant derivative. Let $(M,G)$ be a Riemannian manifold, modelled on $E$ and $X,Y,Z$ vector fields on $M$. Assume that $M \subseteq E$ is open or that we are in a chart for $M$. Then the Levi-Civita covariant derivative is given by
\[
\nabla_X Y(x) = DY(x).X(x) + \Ga(x)(X(x),Y(x))\,,
\]
where $\Ga : M \to L(E,E;E)$ are the \emph{Christoffel symbols} of $G$,
\begin{equation}
\label{eq:define_christoffel}
2 G(\Ga(X,Y),Z) = D_{\cdot,X}G_\cdot(Y,Z) + D_{\cdot,Y}G_\cdot(Z,X) - D_{\cdot,Z}G_\cdot(X,Y)\,.
\end{equation}
The important part to note is that in the definition of $\Ga$ one uses the inverse of the metric. In fact we don't need to be able to always invert it, but we need to now that the right hand side of \eqref{eq:define_christoffel} lies in the image $\check G(TM)$ of the tangent bundle under $\check G$. For strong Riemannian metrics this is the case, but not necessarily for weak ones.

\subsection{Example. The $L^2$-metric}

Consider for example the space of $C^1$-curves
\[
\on{Imm}_{C^1}(S^1,\R^d) = \{ c \in C^1(S^1,\R^d) \,:\, c'(\th) \neq 0,\, \forall \th \in S^1\}\,,
\]
with the $L^2$-metric
\[
G_c(h,k) = \int_{S^1} \langle h(\th), k(\th) \rangle |c'(\th)| \ud \th\,.
\]
Then one can calculate that
\[
D_{c,l}G_\cdot(h,k) = \int_{S^1} \langle h, k\rangle \langle l', c'\rangle \frac{1}{|c'|} \ud \th\,,
\]
and we see that the right hand side of \eqref{eq:define_christoffel}, which is $2G(\Ga(h,k),l)$ in this notation, involves derivatives of $l$, while the left hand side does not. While this is not a complete proof, it shows the idea, why the $L^2$-metric on the space of curves with a finite number of derivatives does not have a covariant derivative.

\subsection{The geodesic equation}
The geodesic equation plays an important role in both shape analysis and computational anatomy. Informally it describes least-energy deformations of shapes or optimal paths of transformations. From a mathematical point we can write it in a coordinate-free way as
\[
\nabla_{\dot c}{\dot c} = 0\,,
\]
and in charts it becomes
\[
\ddot c + \Ga(c)(\dot c, \dot c) = 0\,.
\]
From this it seems clear that the geodesic equation needs the covariant derivative or equivalently the Christoffel symbols to be written down. A more concise way to say that a metric does not have a covariant derivative would be to say that the geodesic equation for the metric does not exist. Now, how can an equation fail to exist? The geodesic equation corresponds to the Euler--Lagrange equation of the energy function
\[
E(c) = \frac 12 \int_0^1 G_c(\dot c, \dot c) \ud t\,,
\]
and a geodesic is a critical point of the energy functional, restricted to paths with fixed endpoints. In a coordinate chart we can differentiate the energy functional
\[
D_{c,h}E = \int_0^1 G_c(\dot c, \dot h) + \frac 12 D_{c,h}G_\cdot(\dot c, \dot c) \ud t\,.
\]
The steps until now can be done with any metric. What cannot always be done is to isolate $h$ in this expression to obtain something of the form $\int_0^1 G_c(\dots, h) \ud t$,
where the ellipsis would contain the geodesic equation.

\subsection{The geodesic distance}

Let $(M,G)$ be a (weak) Riemannian manifold and assume $M$ is connected. For $x,y \in M$ we can define the \emph{geodesic distance} between them as in finite dimensions,
\[
\on{dist}(x,y) = \inf_{\substack{c(0) = x\\ c(1)=y}} \int_0^1 \sqrt{G_c(\dot c, \dot c)} \ud t\,,
\]
where the infimum is taken over all smooth paths or equivalently all piecewise $C^1$-paths. Then $\on{dist}$ has the following properties:
\begin{enumeq}
\item
$\on{dist}(x,y) \geq 0$ for $x,y \in M$;
\item
$\on{dist}(x,y) = \on{dist}(y,x)$;
\item
$\on{dist}(x,z) \leq \on{dist}(x,y) + \on{dist}(y,z)$.
\end{enumeq}
What is missing from the list of properties?
\begin{enumeq}
\item
$\on{dist}(x,y) \neq 0$ for $x \neq y$.
\end{enumeq}
We call this last property \emph{point-separating}\footnote{There is a slight difference between a point-separating and a nonvanishing geodesic distance. We say that $\on{dist}$ is \emph{nonvanishing}, if there exist $x,y \in M$, such that $\on{dist}(x,y) \neq 0$. It follows that a point-separating distance is nonvanishing, but in general not the other way around.}. It may fail to hold for weak Riemannian metrics. This is a purely infinite-dimensional phenomenon; in fact it only happens for weak Riemannian metrics and there are explicit examples of this \cite{Michor2005,Bauer2014}.

Note that vanishing of the geodesic distance does not mean that the metric itself is degenerate. In fact, if $x \neq y$ and $c : [0,1] \to M$ is a path with $c(0) = x$ and $c(1) = y$, then we have
\[
\on{Len}(c) = \int_0^1 \sqrt{G_c(\dot c, \dot c)} \ud t > 0\,,
\]
with a strict inequality and if $\on{Len}(c)= 0$ then $c$ must be the constant path. Thus $\on{dist}(x,y) = 0$ arises, because there might exist a family of paths with positive, yet arbitrary small, length connecting the given points.

What is the \emph{topology induced by the geodesic distance}? In finite dimensions and in fact for strong manifolds we have the following theorem.

\begin{theorem*}[{\cite[Thm.~1.9.5]{Klingenberg1995}}]
Let $(M,G)$ be a strong Riemannian manifold. Then $\on{dist}$ is point-separating and the topology induced by $(M, \on{dist})$ coincides with the manifold topology.
\end{theorem*}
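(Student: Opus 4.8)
The plan is to prove both assertions locally, in a chart, and then patch. Fix $x_0 \in M$ and a chart identifying a neighbourhood of $x_0$ with an open set $U \subseteq E$, sending $x_0$ to $0$, and let $\|\cdot\|$ be the Hilbert norm on the modelling space $E$. In this chart the metric becomes a map $x \mapsto G_x \in L(E,E;\R)$ taking values in the bounded symmetric bilinear forms. The one place where the strongness hypothesis enters is this: $G_{x_0}(\cdot,\cdot)$ induces on $E$ the same topology as $\|\cdot\|$, and since two norm topologies on a vector space agree only if the norms are equivalent, there are constants $0 < a_0 \le b_0$ with $a_0 \|h\|^2 \le G_{x_0}(h,h) \le b_0 \|h\|^2$ for all $h \in E$.

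First I would upgrade this to a uniform estimate on a whole ball. Since $G$ is smooth, the map $x \mapsto G_x$ is continuous from $U$ into $L(E,E;\R)$ with the operator norm (equivalently, $x \mapsto \check G_x \in L(E,E^\ast)$ is continuous and the invertible operators form an open set), so there is an $r > 0$ with $B_r(x_0) \subseteq U$ and $\| G_x - G_{x_0} \|_{\mathrm{op}} < a_0/2$ for all $x \in B_r(x_0)$. Setting $a := a_0/2$ and $b := b_0 + a_0/2$, this yields
\[
a \|h\|^2 \le G_x(h,h) \le b \|h\|^2 \qquad (x \in B_r(x_0),\ h \in E),
\]
and therefore $\sqrt{a}\,\on{Len}_0(c) \le \on{Len}(c) \le \sqrt{b}\,\on{Len}_0(c)$ for every piecewise $C^1$ path $c$ whose image lies in $B_r(x_0)$, where $\on{Len}_0(c) = \int_0^1 \| \dot c(t) \| \ud t$ is the length measured with $\|\cdot\|$ in the chart and $\on{Len}$ is the Riemannian length.

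Point-separation would then follow from the lower half of the bound. Given $x \neq y$, choose such a chart around $x$. If $y \notin B_r(x)$, every path from $x$ to $y$ contains an initial arc joining $x$ to the sphere $\| \cdot \| = r$ inside $B_r(x)$, whose $G$-length is at least $\sqrt{a}\, r$. If $y \in B_r(x)$, a path from $x$ to $y$ either stays in $B_r(x)$, in which case its $G$-length is at least $\sqrt{a}\,\on{Len}_0(c) \ge \sqrt{a}\,\|x - y\|$, or it leaves $B_r(x)$, in which case its $G$-length is again at least $\sqrt{a}\, r$. In every case $\on{dist}(x,y) \ge \sqrt{a}\,\min(\|x - y\|, r) > 0$, so $\on{dist}$ is point-separating.

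It remains to identify the topologies. The estimate just used shows that the metric ball $\{ y : \on{dist}(x_0, y) < \sqrt{a}\,\min(\rho, r) \}$ is contained in the chart ball $B_\rho(x_0)$ for every $\rho \in (0,r)$, so every manifold-open set is open for $\on{dist}$. Conversely, for $y$ in the chart with $\| y - x_0 \| < r$ the straight segment from $x_0$ to $y$ stays in $B_r(x_0)$, hence $\on{dist}(x_0, y) \le \sqrt{b}\,\| y - x_0 \|$; so $\on{dist}(x_0, \cdot)$ is continuous for the manifold topology at each point, and the triangle inequality makes every $\on{dist}$-ball manifold-open. The two topologies therefore coincide. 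The step I expect to be the main obstacle is the uniform comparison $a \|h\|^2 \le G_x(h,h) \le b \|h\|^2$ near $x_0$: the upper (boundedness) bound is a cheap consequence of continuity, but the lower (coercivity) bound at points $x$ near $x_0$ is precisely what strongness provides --- openness of the set of invertible operators applied to $\check G$ --- and it is exactly this that fails for a merely weak metric, consistently with the failure of point-separation mentioned above.
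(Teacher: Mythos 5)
Your argument is correct and is essentially the standard proof (the paper itself only cites \cite[Thm.~1.9.5]{Klingenberg1995} and gives no proof): the whole content is the two-sided comparison $a\|h\|^2 \le G_x(h,h) \le b\|h\|^2$ on a chart ball, obtained from strongness at $x_0$ (equivalence of the two norm topologies on $T_{x_0}M$) plus continuity of $x \mapsto G_x$ into the bounded bilinear forms, and you correctly identify the lower bound as the step that is unavailable for weak metrics. The only points worth writing out in full are the small exit-time argument (the first exit of a path from a ball of radius $r' < r$ lands on the sphere of radius $r'$ and the arc up to that time has $\|\cdot\|$-length at least $r'$) and the use of the triangle inequality to make $\on{dist}$-balls manifold-open at every point, not just at the centre; both are exactly as you indicate.
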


\section{Complete Riemannian manifolds and Hopf--Rinow}

The third lecture discussed completeness properties and how far the theorem of Hopf--Rinow can be generalized to infinite-dimensional manifolds. The second part studied the group of diffeomorphisms of Sobolev regularity in more detail. This group is used in computational anatomy to model anatomical deformations and it is of interest to establish completeness results for Sobolev metrics on this group.

\subsection{Completeness properties}
For a Riemannian manifold $(M,G)$ completeness can mean several things.
\begin{enumeq}
\item
\label{eq:metric_compleness}
$M$ is \emph{metrically complete}, meaning that $(M,\on{dist})$ is a complete metric space; i.e., all Cauchy sequences with respect to $\on{dist}$ converge.
\item
\label{eq:geodesic_completeness}
$M$ is \emph{geodesically complete}, meaning that every geodesic can be continued for all time.
\item
\label{eq:ex_min_geodesics}
Between any two points on $M$ (in the same connected component), there \emph{exists a length minimizing geodesic}.
\end{enumeq}
In finite dimensions the \emph{theorem of Hopf--Rinow} states that (1) and (2) are equivalent and imply (3). The only implication one has in infinite dimensions is that on a strong Riemannian manifold metric completeness implies geodesic compleness.
\[
\begin{array}{c@{\qquad\qquad\qquad}c}
\text{finite dimensions} & \text{infinite dimensions} \\
& \text{(strong Riemannian manifold)} \\
\xymatrix@R=1pc@C=0.3pc{
 \mathrm{(1)} \ar@{<=>}[rr] & \ar@{=>}[d] & \mathrm{(2)} \\ 
& \mathrm{(3)} } &
\xymatrix@R=1pc@C=0.3pc{
 \mathrm{(1)} \ar@{=>}[rr] & & \mathrm{(2)} \\ 
& \mathrm{(3)} }
\end{array}
\]
In general one cannot expect more and there are explicit counterexamples. Atkin \cite{Atkin1975} found an example of a metrically and geodesically complete manifold with two points that cannot be joined by \emph{any} geodesic (not just a minimizing one), showing that
\[
\text{(1) \& (2) $\nRightarrow$ (3)}\,;
\]
Two decades later Atkin \cite{Atkin1997} showed also that one can find a geodesically complete manifold satisfying (3), which is not metrically complete, thus showing
\[
\text{(2) \& (3) $\nRightarrow$ (1)}\,.
\]
A simple example showing that metric and geodesic completeness do not imply existence of minimizing geodesics is Grossman's ellipsoid.

\subsection{Grossman's ellipsoid}
The presentation follows \cite[Sect.~VIII.6]{Lang1999}. Consider a separable Hilbert space $E$ with an orthonormal basis $(e_n)_{n \in \mb N}$ and define the sequence $(a_n)_{n \in \mb N}$ by $a_0 = 1$ and $a_n = 1 + 2^{-n}$ for $n \geq 1$. Consider the ellipsoid
\[
M = \left\{ \sum_{n \in \mb N} x_n e_n \in E \,:\, \sum_{n \in \mb N} \frac{x_n^2}{a_n^2}= 1 \right\}\,.
\]
We can view $M = F(S)$ as the image of the unit sphere $S = \{ x \in X \,:\, \| x \| = 1\}$ under the transformation
\[
F : X \to X\,,\quad
\sum_{n \in \mb N} x_n e_n \mapsto \sum_{n \in \mb N} a_n x_n e_n\,.
\]

Consider a path $c$ in $S$ joining the two points $e_0$ and $-e_0$. Then $Fc$ is a path in $M$ joining $e_0$ and $-e_0$ and every path in $M$ can be written in such a way, because $F$ is invertible. We claim that $\on{dist}(e_0,-e_0) = \pi$, but that there exists no path realizing this distance. The lengths of a path $c(t) = \sum_{n \in \mb N} c_n(t) e_n$ in $S$ and of $Fc$ in $M$ are
\begin{align*}
\on{Len}(c) &= \int_0^1 \sqrt{ \sum_{n \in \mb N} \dot c_n(t)^2 } \ud t &
\on{Len}(Fc) &= \int_0^1 \sqrt{ \sum_{n \in \mb N} a_n^2 \dot c_n(t)^2 } \ud t
\end{align*}
We certainly have
\[
\pi \leq \on{Len}(c) \leq \on{Len}(Fc)\,,
\]
since $\| \dot c\| \leq \| F\dot c\|$. In fact by looking at the sequence $(a_n)_{n\in \mb N}$ we see that $\on{Len}(c) = \on{Len}(Fc)$ for a curve $c(t) = \sum_{n \in \mb N} c_n(t) e_n$, if and only if $\dot c_n(t) = 0$ for $n \geq 1$. But the only curve in $S$ starting at $e_0$ that satisfies this is the constant curve. Thus we have for $c$ in $S$ joining $e_0$ and $-e_0$ the  strict inequality
\[
\pi \leq \on{Len}(c) < \on{Len}(Fc)\,,
\]
showing that $\on{dist}_M(e_0,-e_0) \geq \pi$ and $\on{Len}(Fc) > \pi$ for any curve $Fc$ in $M$ joining them. However, if we let $c$ be the half great circle joining the two points in the $(e_0, e_n)$-plane, then
\[
\on{Len}(Fc) \leq \left(1 + 2^{-n}\right) \pi \to \pi = \on{dist}_M(e_0, -e_0)\,.
\]

\subsection{Sobolev spaces on $\R^n$}
The Sobolev spaces $H^q(\R^d)$ with $q \in \R_{\geq 0}$ can be defined in terms of the Fourier transform
\[ 
\mc F f(\xi) = (2\pi)^{-n/2} \int_{\R^n} e^{-i \langle  x,\xi\rangle} f(x) \ud x\,,
\]
and consist of $L^2$-integrable functions $f$ with the property that $(1+|\xi|^2)^{q/2} \mc F f$ is $L^2$-integrable as well. The same definition can also be used when $q < 0$, but then we have to consider distributions $f$, such that $(1+|\xi|^2)^{q/2} \mc F f$ is an $L^2$-integrable function. An inner product on $H^q(\R^d)$ is given by
\begin{equation}
\label{eq:sobolev_inner_prod_fourier}
\langle f, g \rangle_{H^q} = \mathfrak{Re}\int_{\R^d} (1 + |\xi|^2)^q \mc F f(\xi) \overline{\mc F g(\xi)} \ud \xi\,.
\end{equation}

If $q \in \mb N$, the Sobolev space $H^q(\R^d)$ consists of $L^2$-integrable functions $f:\R^d\to \R$ with the property that all distributional derivatives $\p^\al f$ up to order $|\al|\leq q$ are $L^2$-integrable as well. An inner product, that is equivalent but not equal to the above is
\begin{equation}
\label{eq:sobolev_inner_prod_integer}
\langle f, g \rangle_{H^q} = \int_{\R^d} f(x)g(x) + \sum_{|\al|=q} \p^\al f(x) \p^\al g(x) \ud x\,.
\end{equation}

Sobolev spaces satisfy the following embedding property.
\begin{lemma*}
If $q > d/2 + k$, then $H^q(\R^d) \hookrightarrow C^k_0(\R^d)$.
\end{lemma*}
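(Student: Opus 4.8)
The plan is to argue entirely on the Fourier side and to reduce first to the case $k=0$. For a multi-index $\al$ one has $\mc F(\p^\al f)(\xi) = (i\xi)^\al\,\mc F f(\xi)$ and $|\xi^\al| \le (1+|\xi|^2)^{|\al|/2}$, so $\p^\al f \in H^{q-|\al|}(\R^d)$ whenever $f \in H^q(\R^d)$, with $\|\p^\al f\|_{H^{q-|\al|}} \le \|f\|_{H^q}$. If $q > d/2 + k$, then $q - |\al| > d/2$ for every $|\al| \le k$, so it suffices to prove the claim \emph{if $q > d/2$ then $H^q(\R^d) \hookrightarrow C^0_0(\R^d)$ continuously}; applying this to each $\p^\al f$ with $|\al| \le k$ then shows $f \in C^k_0(\R^d)$ together with the corresponding norm estimate.

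For the base case take $f \in H^q(\R^d)$ with $q > d/2$ and split
\[
|\mc F f(\xi)| = (1+|\xi|^2)^{-q/2}\,\bigl((1+|\xi|^2)^{q/2}|\mc F f(\xi)|\bigr)\,.
\]
By the Cauchy--Schwarz inequality,
\[
\|\mc F f\|_{L^1} \le \left(\int_{\R^d}(1+|\xi|^2)^{-q}\ud\xi\right)^{\!1/2}\left(\int_{\R^d}(1+|\xi|^2)^{q}|\mc F f(\xi)|^2\ud\xi\right)^{\!1/2} = C_{d,q}\,\|f\|_{H^q}\,,
\]
where the second factor is exactly the $H^q$-norm determined by the inner product \eqref{eq:sobolev_inner_prod_fourier}, and $C_{d,q}^2 = \int_{\R^d}(1+|\xi|^2)^{-q}\ud\xi$. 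Passing to polar coordinates, $C_{d,q}^2$ is a constant multiple of $\int_0^\infty (1+r^2)^{-q}r^{d-1}\ud r$, which is finite precisely when $2q-(d-1) > 1$, i.e.\ when $q > d/2$ — this is the only place the hypothesis is used. Hence $\mc F f \in L^1(\R^d)$, with $\|\mc F f\|_{L^1} \le C_{d,q}\|f\|_{H^q}$.

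Since $\mc F f \in L^1(\R^d)$, Fourier inversion exhibits the $L^2$-class $f$ as having the continuous representative $x \mapsto (2\pi)^{-d/2}\int_{\R^d} e^{i\langle x,\xi\rangle}\mc F f(\xi)\ud\xi$; by the Riemann--Lebesgue lemma this representative lies in $C^0_0(\R^d)$, and $\|f\|_\infty \le (2\pi)^{-d/2}\|\mc F f\|_{L^1} \le C\,\|f\|_{H^q}$. Combined with the reduction of the first paragraph, this proves the lemma. The main — essentially the only — obstacle is the convergence of $\int_{\R^d}(1+|\xi|^2)^{-q}\ud\xi$: this is where the dimension-dependent threshold $d/2$ enters, and it explains why the regularity extracted from $H^q$ degrades as $d$ grows. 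The remaining ingredients (Cauchy--Schwarz, Fourier inversion, Riemann--Lebesgue, and the derivative bookkeeping for $k \ge 1$) are routine; the only point requiring care is to state the conclusion for the continuous representative of the $L^2$-class rather than for $f$ as an equivalence class.
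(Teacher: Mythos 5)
Your proof is correct. The paper itself does not prove this lemma---it only points to a standard reference (Adams)---so there is no in-text argument to compare against; what you have written is the classical Fourier-side proof, and it is complete. The reduction to $k=0$ via $\|\p^\al f\|_{H^{q-|\al|}} \le \|f\|_{H^q}$ is sound, the Cauchy--Schwarz splitting correctly isolates the convergence of $\int_{\R^d}(1+|\xi|^2)^{-q}\ud\xi$ as the one place where $q>d/2$ is needed, and you are right to flag that the conclusion is about the continuous representative of the $L^2$-class. The only point worth making fully explicit is the last bookkeeping step: knowing that each distributional derivative $\p^\al f$ with $|\al|\le k$ has a representative in $C^0_0$ does not by itself say that the representative of $f$ is classically $C^k$; the clean way to close this is to note that $\xi^\al \mc F f \in L^1(\R^d)$ for all $|\al|\le k$ (by the same Cauchy--Schwarz estimate, since $q-k>d/2$) and then differentiate under the integral sign in the inversion formula, which identifies the classical derivatives of the continuous representative with the representatives of the distributional ones. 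This is exactly the ``routine bookkeeping'' you allude to, and with that sentence added the argument is airtight.
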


In the above $X \hookrightarrow Y$ means that $X$ is continuously embedded into $Y$ and $C^k_0(\R^d)$ denotes the Banach space $k$-times continuously differentiable functions, that together with their derivatives vanish at infinity. When $q > d/2$, Sobolev spaces also form an algebra.

\begin{lemma*}
If $q > d/2$ and $0 \leq r \leq q$. Then pointwise multiplication can be extended to a bounded bilinear map
\[
H^q(\R^d) \x H^r(\R^d) \to H^r(\R^d)\,,\quad (f,g) \mapsto f \cdot g\,.
\]
\end{lemma*}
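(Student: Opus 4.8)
\emph{Proof plan.} The plan is to argue on the Fourier side, using the description of $H^s(\R^d)$ by a weighted $L^2$-norm: with $\langle\xi\rangle=(1+|\xi|^2)^{1/2}$, the norm $\|f\|_{H^s}$ is comparable to $\|\langle\cdot\rangle^s\mc F f\|_{L^2}$ by \eqref{eq:sobolev_inner_prod_fourier}. Two facts will enter repeatedly. First, since $q>d/2$ we have $\langle\cdot\rangle^{-q}\in L^2(\R^d)$; hence, for every $h\in H^q(\R^d)$, writing $\mc F h=\langle\cdot\rangle^{-q}\cdot\langle\cdot\rangle^q\mc F h$ and applying Cauchy--Schwarz gives $\mc F h\in L^1$ with $\|\mc F h\|_{L^1}\lesssim\|h\|_{H^q}$. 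Second, $H^q\hookrightarrow C^0_0(\R^d)$ by the embedding lemma, so for $f\in H^q$ and $g\in H^r\subseteq L^2$ the pointwise product $fg$ is a genuine $L^2$ function. I would first establish $\|fg\|_{H^r}\lesssim\|f\|_{H^q}\|g\|_{H^r}$ for $f,g$ in the Schwartz space $\mathcal S$, where $\mc F(fg)=(2\pi)^{-d/2}\,\mc F f*\mc F g$ is classical, and then extend the bilinear map by continuity, using density of $\mathcal S$ in both $H^q$ and $H^r$ and the fact that the limiting product coincides with the pointwise one (again because $H^q\hookrightarrow L^\infty$).

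For the core estimate one starts from
\[
\langle\xi\rangle^r\,|\mc F(fg)(\xi)|\ \lesssim\ \int_{\R^d}\langle\xi\rangle^r\,|\mc F f(\xi-\eta)|\,|\mc F g(\eta)|\ud\eta ,
\]
and splits the integral over the regions $|\eta|\le|\xi-\eta|$ and $|\xi-\eta|\le|\eta|$. On the first region $\langle\xi\rangle\le2\langle\xi-\eta\rangle$ and $\langle\eta\rangle\le\langle\xi-\eta\rangle$, so, using $r-q\le0$,
\[
\langle\xi\rangle^r|\mc F f(\xi-\eta)|\ \le\ 2^r\langle\xi-\eta\rangle^{r-q}\big(\langle\cdot\rangle^q|\mc F f|\big)(\xi-\eta)\ \le\ 2^r\langle\eta\rangle^{r-q}\big(\langle\cdot\rangle^q|\mc F f|\big)(\xi-\eta),
\]
and this part contributes at most $2^r\big(\langle\cdot\rangle^q|\mc F f|\big)*\big(\langle\cdot\rangle^{r-q}|\mc F g|\big)$. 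On the second region $\langle\xi\rangle\le2\langle\eta\rangle$, and the part contributes at most $2^r\,|\mc F f|*\big(\langle\cdot\rangle^r|\mc F g|\big)$.

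Both convolutions are then controlled by Young's inequality in the form $\|u*v\|_{L^2}\le\|u\|_{L^2}\|v\|_{L^1}$. For the first, $\langle\cdot\rangle^q|\mc F f|\in L^2$ with norm $\sim\|f\|_{H^q}$, while $\langle\cdot\rangle^{r-q}|\mc F g|=\langle\cdot\rangle^{-q}\cdot\langle\cdot\rangle^r|\mc F g|\in L^1$ with norm $\lesssim\|\langle\cdot\rangle^{-q}\|_{L^2}\|g\|_{H^r}$ by Cauchy--Schwarz. For the second, $\mc F f\in L^1$ with $\|\mc F f\|_{L^1}\lesssim\|f\|_{H^q}$ by the first preliminary fact, while $\langle\cdot\rangle^r|\mc F g|\in L^2$ with norm $\sim\|g\|_{H^r}$. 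Adding the two contributions yields $\|\langle\cdot\rangle^r\mc F(fg)\|_{L^2}\lesssim\|f\|_{H^q}\|g\|_{H^r}$, i.e.\ the desired estimate.

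The step I expect to be the real obstacle is the frequency splitting together with the redistribution of the surplus weight $\langle\xi-\eta\rangle^{r-q}$ onto the $\eta$-variable. The point is that for $d/2<q\le d$ and $r$ in the interior of $[0,q]$ it may happen that neither $r$ nor $q-r$ exceeds $d/2$, so one cannot simply place $|\mc F g|$, or $\langle\cdot\rangle^r|\mc F g|$, into $L^1$; exploiting $\langle\eta\rangle\le\langle\xi-\eta\rangle$ on the ``$g$ low-frequency'' region (and symmetrically on the ``$g$ high-frequency'' region) is precisely what lets the single hypothesis $q>d/2$ carry the argument for all $r\in[0,q]$. An alternative, if one prefers to minimise the bookkeeping, is bilinear complex interpolation with the first slot fixed at $H^q$, between the endpoints $r=0$ (where $H^q\cdot L^2\subseteq L^2$ since $H^q\hookrightarrow L^\infty$) and $r=q$ (the algebra property $H^q\cdot H^q\subseteq H^q$, i.e.\ the $r=q$ case of the estimate above). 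Everything else --- the weighted-$L^2$ characterisation, the $L^1$-bound on $\mc F h$, the identification of $\mc F(fg)$, and the density argument --- is routine.
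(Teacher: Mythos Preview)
Your argument is correct. The paper does not actually prove this lemma: it is stated as a standard fact about Sobolev spaces, with the reader referred to Adams' textbook for the theory. So there is no ``paper's own proof'' to compare against.

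Your proof is one of the standard routes. The frequency-splitting with the transfer of the negative weight $\langle\xi-\eta\rangle^{r-q}$ onto $\langle\eta\rangle^{r-q}$ on the low-$\eta$ region is exactly the device needed to make a single hypothesis $q>d/2$ work uniformly for all $r\in[0,q]$; the subsequent applications of Young and Cauchy--Schwarz are clean. The density-and-extension step is handled correctly as well, since $H^q\hookrightarrow L^\infty$ guarantees the abstract bilinear limit agrees with the pointwise product. The interpolation alternative you mention (between $r=0$, where $H^q\cdot L^2\subset L^2$ is immediate from $H^q\hookrightarrow L^\infty$, and $r=q$, the algebra case) is also valid and is the other common textbook approach.
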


There are several equivalent ways to define Sobolev spaces and these definitions all lead to the same set of functions with the same topology. However, the inner products, while equivalent, are not the same. For example for $q \in \mb N$, \eqref{eq:sobolev_inner_prod_fourier} and \eqref{eq:sobolev_inner_prod_integer} define two equivalent, but different inner products on $H^q(\R^d)$.

For the theory of Sobolev spaces one can consult one of the many books on the subject, e.g. \cite{Adams2003}.

\subsection{The diffeomorphism group $\on{Diff}_{H^q}(\R^d)$}
Denote by $\on{Diff}_{C^1}(\R^d)$ the space of $C^1$-diffeomorphisms of $\R^d$, i.e.,
\[
\on{Diff}_{C^1}(\R^d) = \{ \ph \in C^1(\R^d,\R^d) \,:\, \ph \text{ bijective, } \ph\inv \in C^1(\R^d,\R^d) \}\,.
\] 
For $q > d/2+1$ and $q \in \R$ there are three equivalent ways to define the group $\on{Diff}_{H^q}(\R^d)$ of Sobolev diffeomorphisms:
\begin{align*}
\on{Diff}_{H^q}(\R^d) &= \{ \ph \in \on{Id} + H^q(\R^d,\R^d) \,:\, \ph \text{ bijective, }
\ph\inv \in \on{Id} + H^q(\R^d,\R^d) \} \\
&= \{ \ph \in \on{Id} + H^q(\R^d,\R^d) \,:\, 
\ph \in \on{Diff}_{C^1}(\R^d) \} \\
&= \{ \ph \in \on{Id} + H^q(\R^d,\R^d) \,:\, 
\det D\ph(x) > 0,\, \forall x \in \R^d \}\,.
\end{align*}
If we denote the three sets on the right by $A_1$, $A_2$ and $A_3$, then it is not difficult to see the inclusions $A_1 \subseteq A_2 \subseteq A_3$. The equivalence $A_1 = A_2$ has first been shown in \cite[Sect. 3]{Ebin1970b} for the diffeomorphism group of a compact manifold; a proof for $\on{Diff}_{H^q}(\R^d)$ can be found in \cite{Inci2013}. Regarding the inclusion $A_3 \subseteq A_2$, it is shown in \cite[Cor. 4.3]{Palais1959} that if $\ph \in C^1$ with $\det D\ph(x) > 0$ and $\lim_{|x |\to \infty} | \ph(x)| = \infty$, then $\ph$ is a $C^1$-diffeomorphism.

It follows from the Sobolev embedding theorem, that $\on{Diff}_{H^q}(\R^d) - \on{Id}$ is an open subset of $H^q(\R^d,\R^d)$ and thus a Hilbert manifold. Since each $\ph \in \on{Diff}_{H^q}(\R^d)$ has to decay to the identity for $|x|\to \infty$, it follows that $\ph$ is orientation preserving.

\begin{proposition*}[{\cite[Thm.~1.1]{Inci2013}}]
Let $q > d/2 + 1$. Then $\on{Diff}_{H^q}(\R^d)$ is a smooth Hilbert manifold and a topological group.
\end{proposition*}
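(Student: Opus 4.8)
The plan is to deduce the proposition from a single analytic fact --- a composition estimate for Sobolev maps --- after which the manifold structure and the topological group axioms are essentially formal.

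\textbf{Manifold structure.} This is almost contained in the discussion preceding the statement. By the Sobolev embedding lemma, for $q > d/2+1$ we have $H^q(\R^d,\R^d)\hookrightarrow C^1_0(\R^d,\R^d)$, so any $\ph = \on{Id}+f$ with $f\in H^q(\R^d,\R^d)$ has $D\ph = I + Df$ with $Df$ vanishing at infinity; hence $\det D\ph \to 1$ as $|x|\to\infty$, and a $C^1$-diffeomorphism of $\R^d$ that decays to $\on{Id}$ is orientation preserving, so $\inf_x\det D\ph(x) > 0$. A perturbation $\ph + g$ with $\|g\|_{H^q}$ small is $C^1$-small, hence still has everywhere-positive Jacobian, and since $|\ph(x)+g(x)|\to\infty$ the Hadamard--Palais criterion \cite[Cor.~4.3]{Palais1959} makes it a $C^1$-diffeomorphism, so an element of $\on{Diff}_{H^q}(\R^d)$ via the equality $A_2 = A_3$. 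Thus $\on{Diff}_{H^q}(\R^d)$ is open in the affine Hilbert space $\on{Id}+H^q(\R^d,\R^d)$, and the global chart $\ph\mapsto\ph-\on{Id}$ exhibits it as a smooth Hilbert manifold modelled on $H^q(\R^d,\R^d)$.

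\textbf{Composition.} The analytic core is the estimate that for $q > d/2+1$ the map $(f,\ps)\mapsto f\circ\ps$ sends $H^q(\R^d,\R^d)\times\on{Diff}_{H^q}(\R^d)$ continuously into $H^q(\R^d,\R^d)$, with $\|f\circ\ps\|_{H^q}\le C\,\|f\|_{H^q}$ and $C$ uniform for $\ps$ in a bounded neighbourhood of a fixed diffeomorphism. I would prove this first for smooth $f$, using the Fa\`a di Bruno formula to expand $\p^\al(f\circ\ps)$ as a sum of terms $\big((\p^\be f)\circ\ps\big)\prod_i\p^{\ga_i}\ps$ with $|\be|\le|\al|$ and $\sum_i|\ga_i|=|\al|$: each factor $(\p^\be f)\circ\ps$ is estimated in the appropriate $L^p$ by a change of variables, using the lower bound on $\det D\ps$, while the products $\prod_i\p^{\ga_i}\ps$ are controlled by the Sobolev embedding and the multiplication lemma for $H^q$ stated above; the non-integer case runs the same way with fractional (Gagliardo--Nirenberg / paraproduct) estimates. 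Density of smooth functions in $H^q$ together with the uniform bound then gives continuity in $f$, and a parallel change-of-variables argument gives continuity in $\ps$. Granting this, composition $(\ph,\ps)\mapsto\ph\circ\ps = \ps + f\circ\ps$ (writing $\ph=\on{Id}+f$) is continuous; in particular $\on{Diff}_{H^q}(\R^d)$ is closed under composition, and its elements are invertible inside $\on{Id}+H^q(\R^d,\R^d)$ by the equality $A_1 = A_2$.

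\textbf{Inversion.} For continuity of $\ph\mapsto\ph\inv$ I would localise near $\on{Id}$: right translation $R_{\ph_0}\colon\ps\mapsto\ps\circ\ph_0$ and left translation $L_{\ph_0}\colon\ps\mapsto\ph_0\circ\ps$ are homeomorphisms of $\on{Diff}_{H^q}(\R^d)$ by the composition estimate (the inverse of $R_{\ph_0}$ being $R_{\ph_0\inv}$), and $\ps\inv = L_{\ph_0\inv}\big((\ps\circ\ph_0\inv)\inv\big)$, so it suffices to prove continuity at $\on{Id}$. Let $\ps_n = \on{Id}+g_n\to\on{Id}$, i.e.\ $g_n\to 0$ in $H^q$ (hence $\|g_n\|_{C^1}\to 0$), and write $\ps_n\inv = \on{Id}+h_n$; then $h_n = -\,g_n\circ(\on{Id}+h_n)$, and $\|h_n\|_{C^1}\to 0$ by the $C^1$ inverse function theorem. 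The point is to have the composition estimate in the ``tame'' form
\[
\big\|g_n\circ(\on{Id}+h_n)\big\|_{H^q} \le C\big(\|h_n\|_{C^1}\big)\Big(\|g_n\|_{H^q} + \|g_n\|_{C^1}\,\|h_n\|_{H^q}\Big)\,,
\]
where the dangerous factor $\|h_n\|_{H^q}$ multiplies only the low-order norm $\|g_n\|_{C^1}$; since the latter tends to $0$, the last term is absorbed on the left for $n$ large, giving $\|h_n\|_{H^q}\le 2C\,\|g_n\|_{H^q}\to 0$. Hence $\ps_n\inv\to\on{Id}$, inversion is continuous, and $\on{Diff}_{H^q}(\R^d)$ is a topological group. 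The one genuinely hard step is the composition estimate --- in particular securing it in the tame form above, uniformly for $\ps$ near $\on{Id}$ and for non-integer $q$; once that is in hand the rest is bookkeeping. A careful treatment is given in \cite{Inci2013}.
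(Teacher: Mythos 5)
The paper does not actually prove this proposition --- it is quoted verbatim from \cite[Thm.~1.1]{Inci2013}, and the surrounding text only records the openness of $\on{Diff}_{H^q}(\R^d)-\on{Id}$ in $H^q$ and the equivalence of the three definitions $A_1=A_2=A_3$, both with citations. Your outline is a faithful reconstruction of the standard argument that the cited reference carries out: openness via the embedding $H^q\hookrightarrow C^1_0$ plus the Hadamard--Palais criterion, then a uniform composition estimate, then inversion by reduction to the identity and an absorption argument. So there is nothing to contrast with; the route is the expected one.

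Two places where the real work is hidden deserve flagging. First, the entire analytic content sits in the tame composition estimate, and you assert rather than prove it; for integer $q$ the Fa\`a di Bruno/Moser route you sketch does work (the intermediate terms need Gagliardo--Nirenberg interpolation between the two extreme terms you display), but for non-integer $q$ the paraproduct version is genuinely delicate, and the resulting map $(f,\ps)\mapsto f\circ\ps$ is continuous but nowhere locally Lipschitz, which is why the ``uniform bound plus density of smooth $f$'' mechanism is the only one available --- you identify this correctly. Second, your absorption step $\|h_n\|_{H^q}\le 2C\|g_n\|_{H^q}$ silently assumes $\|h_n\|_{H^q}<\infty$ a priori, i.e.\ that $\ps_n\inv-\on{Id}$ already lies in $H^q$; this is exactly the nontrivial inclusion $A_2\subseteq A_1$, which you import from the cited equivalence. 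Since the paper itself treats that equivalence as an external fact, this is a legitimate move, but in a self-contained proof it would have to be established first (e.g.\ by a bootstrap or flow argument), as otherwise the inversion step is circular. With those two caveats made explicit, the proposal is a correct roadmap.
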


We have the following result concerning the regularity of the composition map.

\begin{proposition*}[{\cite[Thm.~1.1]{Inci2013}}]
Let $q > d/2+1$ and $k \in \mb N$. Then composition
\[
\on{Diff}_{H^{q+k}}(\R^d) \x \on{Diff}_{H^{q}}(\R^d) \to \on{Diff}_{H^{q}}(\R^d)\,,
\quad (\ph, \ps) \mapsto \ph \circ \ps\,,
\]
and the inverse map
\[
\on{Diff}_{H^{q+k}}(\R^d) \to \on{Diff}_{H^{q}}(\R^d)\,,
\quad \ph \mapsto \ph\inv\,,
\]
are $C^k$-maps.
\end{proposition*}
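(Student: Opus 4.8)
The plan is to reduce both claims to the regularity of the \emph{substitution map} $(f,\ps)\mapsto f\circ\ps$ and then, for the inverse map, to feed the resulting composition statement into the Banach inverse/implicit function theorem quoted above. Writing $\ph=\on{Id}+f$ with $f\in H^{q+k}(\R^d,\R^d)$ and $\ps=\on{Id}+g$ with $g\in H^q(\R^d,\R^d)$, one has $\ph\circ\ps-\on{Id}=g+f\circ\ps$, so composition in the group differs from
\[
\mu_j\colon H^{q+j}(\R^d,\R^n)\x\on{Diff}_{H^q}(\R^d)\to H^q(\R^d,\R^n)\,,\qquad(f,\ps)\mapsto f\circ\ps\,,
\]
only by the bounded linear term $g$; hence it suffices to prove that $\mu_j$ is a $C^j$-map for $0\le j\le k$ and every target dimension $n$. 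The case $j=0$ is the continuity of composition, i.e.\ the topological group property of the previous proposition. The basic analytic input I would record first is the Moser-type estimate: for $q>d/2+1$ and $\ps\in\on{Diff}_{H^q}(\R^d)$ the right translation $R_\ps\colon f\mapsto f\circ\ps$ is bounded on $H^s$ for $0\le s\le q$, with norm controlled by $\|\ps-\on{Id}\|_{H^q}$; this follows from the change-of-variables formula (using that $\det D\ps$ is bounded below, as $\ps$ is a $C^1$-diffeomorphism decaying to $\on{Id}$), the Fa\`a di Bruno expansion of $D^\al(f\circ\ps)$, and the algebra property of $H^s$ for $s>d/2$.

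The heart of the proof is a one-derivative differentiability lemma: for $q>d/2+1$ the map $\mu_1$ is $C^1$, with $D\mu_1(f,\ps)(\de f,\de\ps)=\de f\circ\ps+(Df\circ\ps)\cdot\de\ps$. First I would note that $f\mapsto f\circ\ps$ is already linear and bounded $H^{q+1}\to H^q$. For the $\ps$-variable I would use the pointwise Taylor identity $f\circ(\ps+\de\ps)-f\circ\ps=\int_0^1 Df(\ps+t\de\ps)\,\de\ps\,\ud t$ (legitimate since $H^{q+1}\hookrightarrow C^1_0$) and estimate the remainder $\int_0^1[Df\circ(\ps+t\de\ps)-Df\circ\ps]\,\de\ps\,\ud t$ in $H^q$ by $C\|\de\ps\|_{H^q}\sup_t\|Df\circ(\ps+t\de\ps)-Df\circ\ps\|_{H^q}$, which is $o(\|\de\ps\|_{H^q})$ by continuity of $\mu_0$ applied to $Df\in H^q$ (here $f\in H^{q+1}$ enters) together with the algebra estimate. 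For continuity of $D\mu_1$ the term $\de\ps\mapsto(Df\circ\ps)\de\ps$ is handled by continuity of $(f,\ps)\mapsto Df\circ\ps$ into $H^q$ and boundedness of multiplication $H^q\x H^q\to H^q$; the term $\de f\mapsto\de f\circ\ps$, that is the dependence $\ps\mapsto R_\ps$, must be continuous in the operator norm of $L(H^{q+1},H^q)$, and this is exactly where the sacrificed derivative is spent:
\[
\|R_{\ps_1}\de f-R_{\ps_2}\de f\|_{H^q}\le C\,\|\ps_1-\ps_2\|_{H^q}\sup_t\|D\de f\circ(\ps_2+t(\ps_1-\ps_2))\|_{H^q}\le C'\,\|\de f\|_{H^{q+1}}\,\|\ps_1-\ps_2\|_{H^q}\,,
\]
so $\ps\mapsto R_\ps$ is locally Lipschitz into $L(H^{q+1},H^q)$, whereas into $L(H^q,H^q)$ it is only strongly continuous.

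The passage to general $k$ is then a bootstrap: assuming $\mu_{k-1}$ is $C^{k-1}$ into every $H^q$, the lemma gives $D\mu_k(f,\ps)(\de f,\de\ps)=R_\ps\de f+(Df\circ\ps)\de\ps$, whose second summand is $C^{k-1}$ in $(f,\ps)$ since $f\mapsto Df$ is linear continuous $H^{q+k}\to H^{q+k-1}$ and $\mu_{k-1}$ is $C^{k-1}$, while a parallel induction shows $\ps\mapsto R_\ps$ is $C^{k-1}$ as a map $\on{Diff}_{H^q}(\R^d)\to L(H^{q+k},H^q)$ (its $\ps$-derivative $\de\ps\mapsto(\de f\mapsto(D\de f\circ\ps)\cdot\de\ps)$ factors through $R_\ps$ on $H^{q+k-1}$ and multiplication by $\de\ps$); hence $D\mu_k$ is $C^{k-1}$, so $\mu_k$ is $C^k$, which gives the composition statement. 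For the inverse map (take $k\ge1$; $k=0$ is again the topological group statement) I would apply the inverse/implicit function theorem to $\mc G(\ph,\ps)=\ph\circ\ps-\on{Id}$, defined on $\on{Diff}_{H^{q+k}}(\R^d)\x\on{Diff}_{H^q}(\R^d)$ and $C^k$ by the composition result, with $\mc G(\ph,\ph\inv)=0$ (recall $\ph\inv\in\on{Diff}_{H^{q+k}}\subseteq\on{Diff}_{H^q}$). The partial derivative $D_\ps\mc G(\ph_0,\ph_0\inv).h=(D\ph_0\circ\ph_0\inv)\cdot h$ is multiplication by the matrix field $A=D\ph_0\circ\ph_0\inv$; since $D\ph_0-\on{Id}\in H^{q+k-1}$ and $R_{\ph_0\inv}$ is bounded on $H^{q+k-1}$ one gets $A-\on{Id}=D(\ph_0-\on{Id})\circ\ph_0\inv\in H^{q+k-1}$ with $q+k-1>d/2$, and likewise $A\inv-\on{Id}=D(\ph_0\inv-\on{Id})\in H^{q+k-1}$; as $k\ge1$ and $H^{q+k-1}\x H^q\to H^q$ is bounded, $M_A$ and $M_{A\inv}$ are mutually inverse bounded operators on $H^q(\R^d,\R^d)$, so $D_\ps\mc G(\ph_0,\ph_0\inv)$ is an isomorphism. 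The implicit function theorem then produces, with the same regularity, a local $C^k$-map $\ph\mapsto\ps(\ph)$ with $\ph\circ\ps(\ph)=\on{Id}$, hence $\ps(\ph)=\ph\inv$, and uniqueness of inverses patches these into the global $C^k$ inverse map.

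The step I expect to be the main obstacle is the one-derivative differentiability lemma and, beneath it, the Moser composition estimate together with the precise loss-of-derivative accounting: composition is merely continuous (never differentiable) out of $H^q\x\on{Diff}_{H^q}$, and one must verify that sacrificing exactly one Sobolev derivative in the left factor upgrades the strong operator continuity of $\ps\mapsto R_\ps$ to norm continuity — the fact that powers the entire induction and, via $\mc G$, the regularity of inversion as well.
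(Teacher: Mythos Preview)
The paper does not give its own proof of this proposition: it is stated with a citation to \cite[Thm.~1.1]{Inci2013} and no argument is supplied in the text. So there is nothing in the paper to compare your proposal against directly.

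That said, your outline is essentially the standard proof one finds in the cited reference (and, before it, in Ebin--Marsden for compact base manifolds): reduce composition in the group to the substitution map $\mu_j:(f,\ps)\mapsto f\circ\ps$ via $\ph\circ\ps-\on{Id}=g+f\circ\ps$; establish the Moser-type boundedness of $R_\ps$ on $H^s$ for $0\le s\le q$; prove the key one-derivative lemma that $\mu_1$ is $C^1$ with the expected formula, the crucial point being that sacrificing one Sobolev order upgrades strong continuity of $\ps\mapsto R_\ps$ to norm continuity into $L(H^{q+1},H^q)$; bootstrap by induction to get $\mu_k$ of class $C^k$; and finally obtain inversion from the implicit function theorem applied to $\mc G(\ph,\ps)=\ph\circ\ps-\on{Id}$, checking that $D_\ps\mc G$ is multiplication by an invertible matrix field in $\on{Id}+H^{q+k-1}$. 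This is exactly the architecture of the proof in Inci--Kappeler--Topalov, and your identification of the one-derivative lemma and the loss-of-derivative bookkeeping as the load-bearing step is accurate. One small point to be explicit about in a full write-up: your Lipschitz estimate for $R_{\ps_1}-R_{\ps_2}$ uses the line segment $\ps_2+t(\ps_1-\ps_2)$, which must remain in $\on{Diff}_{H^q}(\R^d)$; this is only guaranteed locally, so the norm-continuity of $\ps\mapsto R_\ps$ is established on a neighbourhood first and then extended, but this is routine.
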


This proposition means that we have to trade regularitiy of diffeomorphisms to obtain regularity of the composition map, i.e., if $\ph$ is of class $H^{q+k}$, then the composition into $H^q$-diffeomorphisms will be of class $C^k$.

We can also look at the group
\[
\on{Diff}_{H^\infty}(\R^d) = \bigcap_{q > d/2+1} \on{Diff}_{H^q}(\R^d)\,;
\]
it consists of smooth diffeomorphisms that, together with all derivatives, decay towards infinity like $L^2$-functions. It is a smooth, regular, Fr\'echet--Lie group \cite{Michor2013b}; its Lie algebra is $\mf X_{H^\infty}(\R^d) = \bigcap_{q > d/2+1} H^{q}(\R^d)$.

\subsection{Connection to LDDMM}
One beautiful property of the Sobolev diffeomorphism group is that it coincides with the group
\[
\mc G_{H^q(\R^d,\R^d)} = \left\{ \ph(1) \,:\, \ph(t)\text{ is the flow of some }u \in L^1([0,1],H^q(\R^d,\R^d)) \right\}\,,
\]
which is used in the LDDMM framework. We have
\begin{proposition*}[{\cite[Thm.~8.3]{Bruveris2017b}}]
Let $q > d/2 +1$. Then
\[
\mc G_{H^q(\R^d,\R^d)} = \on{Diff}_{H^q}(\R^d)_0\,,
\]
where the space on the right is the connected component of $\on{Id}$.
\end{proposition*}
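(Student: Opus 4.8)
The plan is to prove the two inclusions $\mc G_{H^q(\R^d,\R^d)} \subseteq \on{Diff}_{H^q}(\R^d)_0$ and $\on{Diff}_{H^q}(\R^d)_0 \subseteq \mc G_{H^q(\R^d,\R^d)}$ separately. For the first inclusion, take $u \in L^1([0,1], H^q(\R^d,\R^d))$ and consider the flow equation $\p_t \ph(t,x) = u(t,\ph(t,x))$, $\ph(0,x) = x$. The key point is that $H^q(\R^d,\R^d)$ with $q > d/2+1$ is a space of vector fields for which the (nonautonomous, merely $L^1$-in-time) flow equation is well-posed: this is exactly the situation the excerpt alludes to after the ODE existence theorem in Banach spaces, where ``one can get by with less regularity of $F(t,x)$ in the $t$-variable.'' Working in the Banach space $\on{Id} + H^q(\R^d,\R^d)$ (an open subset of which is $\on{Diff}_{H^q}(\R^d)$), one writes $\ph(t) = \on{Id} + f(t)$ and recasts the flow equation as an integral equation $f(t) = \int_0^t u(s, \on{Id}+f(s))\ud s$; the right-translation $f \mapsto u(s,\on{Id}+f)$ is locally bounded and locally Lipschitz on $H^q$ because $q > d/2+1$ makes composition with an $H^q$-diffeomorphism a reasonable (locally Lipschitz) operation, using the algebra and composition properties of Sobolev spaces recorded above. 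A standard Picard iteration with an $L^1$-in-time Gronwall argument then produces a solution $\ph(t) \in \on{Id} + H^q(\R^d,\R^d)$ for all $t \in [0,1]$; since $\det D\ph(t,x) > 0$ (the flow is a limit of the identity along a continuous path of local diffeomorphisms, or one invokes Liouville's formula), the characterization $A_3$ of $\on{Diff}_{H^q}(\R^d)$ shows $\ph(1) \in \on{Diff}_{H^q}(\R^d)$, and it lies in the identity component because $t \mapsto \ph(t)$ is a continuous path in $\on{Diff}_{H^q}(\R^d)$ from $\on{Id}$ to $\ph(1)$.

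For the reverse inclusion, the strategy is a connectedness argument: show that $\mc G_{H^q(\R^d,\R^d)}$ is a subgroup of $\on{Diff}_{H^q}(\R^d)$ that contains an open neighborhood of $\on{Id}$; any such subgroup of a topological group automatically contains the whole identity component. That $\mc G := \mc G_{H^q(\R^d,\R^d)}$ is a subgroup is routine: concatenating and time-reversing flows (with the appropriate time-rescaling, staying within $L^1$) shows closure under composition and inverses, and the constant path gives $\on{Id} \in \mc G$. The substantive point is that $\mc G$ contains a neighborhood of $\on{Id}$. Given $\ph = \on{Id} + f$ with $\|f\|_{H^q}$ small enough that the straight-line path $\ph(t) = \on{Id} + tf$ stays inside $\on{Diff}_{H^q}(\R^d)$ (possible because the latter minus $\on{Id}$ is open in $H^q(\R^d,\R^d)$, by the Sobolev embedding remark in the excerpt), one recovers the generating vector field by inverting the flow relation: $u(t,\cdot) = (\p_t \ph(t)) \circ \ph(t)\inv = f \circ (\on{Id}+tf)\inv$. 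One must check $u \in L^1([0,1],H^q(\R^d,\R^d))$; this is where the composition estimates for Sobolev spaces enter again, controlling $\|f \circ (\on{Id}+tf)\inv\|_{H^q}$ uniformly in $t$ in terms of $\|f\|_{H^q}$. Hence $\ph = \ph(1) \in \mc G$, so $\mc G$ is an open subgroup, and therefore $\mc G \supseteq \on{Diff}_{H^q}(\R^d)_0$.

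The main obstacle is the flow-well-posedness in the $L^1$-in-time setting together with the quantitative control needed to keep the flow (and its generating vector field) inside $H^q$: one needs the right-composition map on $H^q(\R^d,\R^d)$ to be locally Lipschitz, and one needs a priori bounds showing the $H^q$-norm of the flow does not blow up on $[0,1]$. Both rest on the Sobolev algebra property and the composition lemmas cited above — $q > d/2+1$ is precisely the threshold that makes ``$H^q$-vector field $\Rightarrow$ $C^1$-flow'' work while keeping the flow in $H^q$ — but assembling the nonautonomous Gronwall estimate carefully (so that the merely integrable time-dependence is handled correctly) is the delicate part; everything else is bookkeeping with the group structure and the openness of $\on{Diff}_{H^q}(\R^d) - \on{Id}$ in $H^q(\R^d,\R^d)$.
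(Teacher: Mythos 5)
Your overall architecture coincides with the paper's: the inclusion $\on{Diff}_{H^q}(\R^d)_0 \subseteq \mc G_{H^q(\R^d,\R^d)}$ is proved there exactly as you do it, via the linear paths $\ph(t)=\on{Id}+tf$ in a convex neighbourhood of the identity, the generating field $u(t)=\p_t\ph(t)\circ\ph(t)\inv=f\circ(\on{Id}+tf)\inv$ (which lies in $C([0,1],H^q)\subseteq L^1$ by continuity of composition and inversion on the topological group $\on{Diff}_{H^q}(\R^d)$), and the fact that a subgroup containing a neighbourhood of $\on{Id}$ contains the whole identity component. For the converse inclusion the paper does \emph{not} argue directly: it cites an external theorem asserting that the flow of any $u\in L^1([0,1],H^q(\R^d,\R^d))$ is a curve in $\on{Diff}_{H^q}(\R^d)$. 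You attempt to prove that statement, and this is where your sketch has a genuine gap.

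You assert that $f\mapsto u(s,\on{Id}+f)$ is ``locally bounded and locally Lipschitz on $H^q$'' because $q>d/2+1$. But right-composition $X\mapsto X\circ\ph$ is not locally Lipschitz in $\ph$ as a map into $H^q$: any Lipschitz or differentiability estimate in the $\ph$-slot produces a term $DX\circ\ph$ and therefore costs one derivative of $X$. This is precisely the loss-of-derivatives phenomenon recorded in the paper's own proposition that composition $\on{Diff}_{H^{q+k}}(\R^d)\x\on{Diff}_{H^q}(\R^d)\to\on{Diff}_{H^q}(\R^d)$ is only $C^k$; for $k=0$ one gets mere continuity, not Lipschitz continuity. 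Consequently the Picard iteration for $f(t)=\int_0^t u(s,\on{Id}+f(s))\ud s$ admits no contraction estimate in $H^q$, and the nonautonomous Gronwall argument you describe does not get started. The standard repair --- and the actual content of the cited theorem --- is a two-step argument: first solve the flow at lower regularity (pointwise, or in $C^1$, using the embedding $H^q\hookrightarrow C^1_0$ together with an $L^1$-in-time Osgood/Gronwall bound), and then show separately, by a differential inequality for $\|D\ph(t)-\on{Id}\|_{H^{q-1}}$ exploiting the algebra property $H^{q-1}\cdot H^{q-1}\subseteq H^{q-1}$, that the flow neither loses nor gains Sobolev regularity on $[0,1]$. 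Everything else in your write-up (the subgroup property of $\mc G_{H^q}$, openness, and the identity-component conclusion) is correct and matches the paper.
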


\begin{proof}
Let $U$ be a convex neighborhood around $\on{Id}$ in $\on{Diff}_{H^q}(\R^d)$. Then every $\ps \in U$ can be reached from $\on{Id}$ via the smooth path $\ph(t) = (1-t) \on{Id} + t \ps$. Since $\ph(t)$ is the flow of the associated vector field $u(t) = \p_t \ph(t) \circ \ph(t)\inv$ and $u \in C([0,1], H^q)$, it follows that $\ps \in \mc G_{H^q}$. Thus $U \subseteq \mc G_{H^q}$ and since $\mc G_{H^q}$ is a group, the same holds also for the whole connected component containing $U$. This shows the inclusion $\on{Diff}_{H^q}(\R^d)_0 \subseteq \mc G_{H^q}$.

For the inclusion $\mc G_{H^q} \subseteq \on{Diff}_{H^q}(\R^d)$ we have to show that given a vector field $u \in L^1([0,1], H^q(\R^d,\R^d))$ the flow defined by $\p_t \ph(t) = u(t) \circ \ph(t)$ is a curve in $\on{Diff}_{H^q}(\R^d)$. This is the content of \cite[Thm.~4.4]{Bruveris2017b}.
\end{proof}

\section{Riemannian metrics induced by the diffeomorphism group}

The last lecture considered the action of the diffeomorphism group $\on{Diff}(\R^d)$ on the space of embeddings $\on{Emb}(M,\R^d)$ given by composition, $(\ph, q) \mapsto \ph \circ q$. Given a Riemannian metric on $\on{Diff}(\R^d)$ as is the case in the LDDMM framework in computational anatomy this action can be used to induce a Riemannian metric on $\on{Emb}(M,\R^d)$ such that for a given $q_0 \in \on{Emb}(M,\R^d)$ the projection $\ph_{q_0}(\ph) = \ph \circ q_0$ is a Riemannian submersion. This lecture looks at this construction and properties of the induced Riemannian metric. Some more recent analytical results can be found in \cite{Bruveris2017b_preprint}.

\subsection{The space $\on{Emb}(M,\R^d)$}
Let $M$ be a compact manifold without boundary. We denote the space of embeddings of $M$ into $\R^d$ by
\[
\on{Emb}(M,\R^d) = \{ q \in C^\infty(M,\R^d) \,:\, q\text{ is an embedding}\}\,;
\]
to be more precise an embedding $q$ is an immersion ($T_x q$ is injective for all $x \in M$) and a homeomorphism onto its image. It is an open subset of the space of immersions, $\on{Imm}(M,\R^d)$, and thus also of $C^\infty(M,\R^d)$; hence it is a Fr\'echet manifold.

The shape space of embeddings is
\[
B_e(M,\R^d) := \on{Emb}(M,\R^d) / \on{Diff}(M)\,.
\]
It can be identified with the set of all embedded submanifolds of $\R^d$, that are diffeomorphic to $M$. Regarding its manifold structure we have the following theorem.

\begin{theorem*}[{\cite[Thm.~1.5]{Michor1991}}]
The quotient space $B_e(M,\R^d)$ is a smooth Hausdorff manifold and the projection
\[
\pi : \on{Emb}(M,\R^d) \to B_e(M,\R^d)
\]
is a smooth principal fibration with $\on{Diff}(M)$ as structure group. 
\end{theorem*}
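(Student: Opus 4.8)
The plan is to deduce the theorem from a slice theorem for the right action of $\on{Diff}(M)$ on $\on{Emb}(M,\R^d)$ given by composition, $(\ph,q)\mapsto q\circ\ph$. That this action is well defined and smooth amounts to smoothness of composition $C^\infty(M,M)\times C^\infty(M,\R^d)\to C^\infty(M,\R^d)$, a standard feature of the manifold structure on mapping spaces; it is free because $q\circ\ph=q$ with $q$ injective forces $\ph=\on{id}_M$. So the content is: around every $q_0\in\on{Emb}(M,\R^d)$ produce a submanifold $\mc S_{q_0}$ transverse to the orbits such that $(q,\ph)\mapsto q\circ\ph$ restricts to a diffeomorphism from $\mc S_{q_0}\times\on{Diff}(M)$ onto an open, $\on{Diff}(M)$-saturated neighbourhood of $q_0$. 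Granting such slices, the resulting identifications $\pi\inv(\pi(\mc S_{q_0}))\cong\mc S_{q_0}\times\on{Diff}(M)$ are the local trivialisations exhibiting $\pi$ as a principal $\on{Diff}(M)$-bundle (recall that $\on{Diff}(M)$ is a Fr\'echet--Lie group), and the induced bijections $\mc S_{q_0}\cong\pi(\mc S_{q_0})$ will be the charts on $B_e(M,\R^d)$.

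I would build the slice from a tubular neighbourhood of the image. Put $N_0=q_0(M)$, a compact embedded submanifold of $\R^d$, and let $p:W\to N_0$ be a tubular neighbourhood: $W$ an open neighbourhood of $N_0$ in $\R^d$, $p$ a smooth retraction, with $W$ identified with a neighbourhood of the zero section of the normal bundle $\on{Nor}(N_0)$ so that $p\inv(y)$ corresponds to the fibre $\on{Nor}_y(N_0)$. If $q\in\on{Emb}(M,\R^d)$ is $C^0$-close to $q_0$ then $q(M)\subseteq W$; if it is in addition $C^1$-close, then $\ph_q:=q_0\inv\circ p\circ q:M\to M$ is $C^1$-close to $\on{id}_M$ and hence a diffeomorphism. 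Because the $C^\infty$-topology on $\on{Emb}(M,\R^d)$ is finer than the $C^1$-topology, there is a $C^\infty$-open neighbourhood $\mc O$ of $q_0$ on which $q\mapsto\ph_q\in\on{Diff}(M)$ is defined, and I set
\[
\mc S_{q_0}=\{\,q\in\mc O \,:\, p\circ q=q_0\,\}=\{\,q\in\mc O \,:\, \ph_q=\on{id}_M\,\}\,.
\]
Via the tubular identification a point of $\mc S_{q_0}$ is the same datum as a small normal vector field along $q_0$, so $\mc S_{q_0}$ is canonically an open neighbourhood of $0$ in the Fr\'echet space $\Ga(q_0^\ast\on{Nor}(N_0))$, which will serve as the model space for $B_e$ near $\pi(q_0)$.

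Next I would check that $\Phi:\mc S_{q_0}\times\on{Diff}(M)\to\on{Emb}(M,\R^d)$, $(q,\ph)\mapsto q\circ\ph$, is a diffeomorphism onto the open set $\pi\inv(\pi(\mc S_{q_0}))$. Injectivity: if $q\circ\ph=q'\circ\ph'$ with $q,q'\in\mc S_{q_0}$, then applying $q_0\inv\circ p$ gives $\ph_q\circ\ph=\ph_{q'}\circ\ph'$, so $\ph=\ph'$ and then $q=q'$ by freeness. Surjectivity onto its image and the explicit inverse $q'\mapsto\bigl(q'\circ\ph_{q'}\inv,\ \ph_{q'}\bigr)$ isolate exactly what smoothness statements are needed: that $q\mapsto\ph_q$ is smooth, and that composition and inversion of diffeomorphisms, and composition of a diffeomorphism with an embedding, are smooth maps between the relevant mapping manifolds. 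Once $\Phi$ is known to be a diffeomorphism, the slices are genuine submanifolds, the $\pi(\mc S_{q_0})$ cover $B_e(M,\R^d)$, the chart changes are smooth because they are read off from the $\Phi$'s, and local triviality holds. For the Hausdorff property I would argue that the action is proper, using that $M$ is compact and embeddings are homeomorphisms onto their (compact, hence closed) images: if $q^{(n)}\to q$ and $q^{(n)}\circ\ph_n\to q'$ in $\on{Emb}(M,\R^d)$, then $q'(M)\subseteq q(M)$, hence $q'(M)=q(M)$, and a standard compactness argument yields (a subsequence of) $\ph_n\to\ph\in\on{Diff}(M)$ with $q'=q\circ\ph$; thus the orbit relation is closed and $B_e(M,\R^d)$ is Hausdorff.

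The main obstacle is analytic, not geometric: every smoothness assertion above lives in the Fr\'echet category, where, as emphasised earlier in the text, the inverse function theorem is not available. Consequently ``$\Phi$ is a diffeomorphism'', ``$q\mapsto\ph_q$ is smooth'', and ``the chart changes on $B_e$ are smooth'' cannot be obtained from the finite-dimensional arguments and must instead be established using the exponential law for mapping spaces --- smoothness of $f\mapsto g\circ f$, of $f\mapsto f\inv$, and of post-composition by the tubular projection $p$ --- within the convenient-calculus framework. Once this bookkeeping for composition, inversion, and the tubular projection is in place, the principal-bundle structure of $\pi$ follows from the slice exactly as in finite dimensions.
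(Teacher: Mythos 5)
Your proposal is correct and follows essentially the same route as the source the paper cites for this theorem (the paper itself gives no proof): a free action, a tubular-neighbourhood slice modelled on sections of the normal bundle, and the resulting local trivialisations $\mc S_{q_0}\x\on{Diff}(M)\cong\pi\inv(\pi(\mc S_{q_0}))$, with the analytic burden carried by smoothness of composition, inversion and push-forward in the Fr\'echet/convenient calculus. Indeed, the chart $\ps_q(a)=q+an_q$ that the paper writes down immediately after the theorem statement is exactly your slice specialised to the hypersurface case.
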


When $\dim M = d-1$ and $M$ is orientable we can define a chart around $\pi(q) \in B_e(M,\R^d)$ with $q \in \on{Emb}(M,\R^d)$ by
\[
\pi \circ \ps_q : C^\infty(M,(-\ep,\ep)) \to B_e(M,\R^d)\,,
\]
with $\ep$ sufficiently small, where $\ps_q : C^\infty(M,(-\ep,\ep)) \to \on{Emb}(M,\R^d)$ is defined by $\ps_q(a) = q + an_q$ and $n_q$ is a unit-length normal vector field to $q$.

\subsection{Quotient representations of $B_e(M,\R^d)$}
%In this section we will consider Riemannian metrics on $B_e(M,\R^d)$, 
%the space of embedded type $M$ submanifolds that are induced by the left action of $\on{Diff}(\R^d)$. We assume that $\on{dim}M < d$.
Consider\footnote{The presentation follows \cite[Sect.~8]{Bauer2014}.} $B_e(M,\R^d)$ as the space of embedded type $M$ submanifolds of $\R^d$. We assume $\dim M < d$ for the space to be nonempty. For the remainder of this lecture we will write $\on{Diff}(\R^d)$ for the group\footnote{Since we are acting on embeddings of a compact manifold one could equally use $\on{Diff}_c(\R^d)$, the group of compactly supported diffeomorphisms, or $\on{Diff}_{\mc S}(\R^d)$, diffeomorphisms that decay rapidly towards the identity} $\on{Diff}_{H^\infty}(\R^d)$, in fact we only use the connected component of the identity of $\on{Diff}_{H^\infty}(\R^d)$, and $\mf X(M)$ for the corresponding space $\mf X_{H^\infty}(\R^d)$ of vector fields.

The natural action of $\on{Diff}(\R^d)$ on $B_e(M,\R^d)$ is given by
\[
\on{Diff}(\R^d) \x B_e(M,\R^d) \ni (\ph, Q) \mapsto \ph(Q) \in B_e(M,\R^d)\,.
\]
This action is in general not transitive -- consider for example a knotted and an unknotted loop in $\R^3$ -- but it is locally transitive and hence its orbits are open subsets of $B_e(M,\R^d)$. Since the group $\on{Diff}(\R^d)$ is connected, orbits of the $\on{Diff}(\R^d)$-action are the connected components of $B_e(M,\R^d)$. For $Q \in B_e(M,\R^d)$ the isotropy group
\[
\on{Diff}(\R^d)_Q = \left\{ \ph\,:\, \ph(Q) = Q \right\}\,,
\]
consists of all diffeomorphisms that map $Q$ to itself. Thus each orbit $\on{Orb}(Q) = \on{Diff}(\R^d).Q$ can be identified with the quotient
\[
B_e(M,\R^d) \supseteq \on{Orb}(Q) \cong {\on{Diff}(\R^d)}/{ \on{Diff}(\R^d)_Q}\,.
\]
Let us take a step backwards and remember that we defined $B_e(M,\R^d)$ to be the quotient
\[
B_e(M,\R^d) \cong \on{Emb}(M,\R^d) / \on{Diff}(M)\,.
\]
The diffeomorphism group $\on{Diff}(\R^d)$ also acts on the space $\on{Emb}(M,\R^d)$ of embeddings -- i.e., the space of parametrized submanifolds -- with the action
\[
\on{Diff}(\R^d) \x \on{Emb}(M,\R^d)\!\ni\! (\ph, q) \mapsto \ph \on{\circ} q \in \on{Emb}(M,\R^d).
\]
This action is generally not transitive either, but has open orbits as before. For fixed $q \in \on{Emb}(M,\R^d)$,  the isotropy group
\[
\on{Diff}(\R^d)_q = \left\{ \ph\,:\, \ph|q(M) \equiv \on{Id} \right\}\,,
\]
consists of all diffeomorphisms that fix the image $q(M)$ pointwise. Note the subtle difference between the two groups $\on{Diff}(\R^d)_q$ and $\on{Diff}(\R^d)_Q$, when $Q = q(M)$. The former consists of diffeomorphisms that fix $q(M)$ pointwise, while elements of the latter only fix $q(M)$ as a set. As before we can identify each orbit $\on{Orb}(q) = \on{Diff}(\R^d).q$ with the set
\[
\on{Emb}(M,\R^d) \supseteq \on{Orb}(q) \cong {\on{Diff}(\R^d)}/{ \on{Diff}(\R^d)_q}\,.
\]
The isotropy groups are subgroups of each other
\[
\on{Diff}(\R^d)_q \unlhd \on{Diff}(\R^d)_Q \leq \on{Diff}(\R^d)\,,
\]
with $\on{Diff}(\R^d)_q$ being a normal subgroup of $\on{Diff}(\R^d)_Q$. Their quotient can be identified with
\[
\on{Diff}(\R^d)_Q / \on{Diff}(\R^d)_q \cong \on{Diff}(M)\,.
\]
Now we have the two-step process,
\begin{multline*}
\on{Diff}(\R^d)
\to \on{Diff}(\R^d) / \on{Diff}(\R^d)_q \cong \on{Orb}(q) \subseteq \on{Emb}(M,\R^d) \to \\
 \to \on{Emb}(M,\R^d) / \on{Diff}(M) \cong B_e(M,\R^d)\,.
\end{multline*}
In particular the open subset $\on{Orb}(Q)$ of $B_e(M,\R^d)$ can be represented as any of the quotients
\begin{multline*}
\on{Orb}(Q) \cong \on{Orb}(q) / \on{Diff}(M) \cong \\
\cong \left.{}^{\displaystyle {\on{Diff}(\R^d)}/{ \on{Diff}(\R^d)_q} }
\middle/_{\displaystyle \on{Diff}(\R^d)_Q/\on{Diff}(\R^d)_q}\right. \cong
\on{Diff}(\R^d)/{ \on{Diff}(\R^d)_Q}\,.
\end{multline*}

\subsection{Metrics induced by $\on{Diff}_{H^\infty}(\R^d)$}
Let a right-invariant Riemannian metric $G^{\on{Diff}}$ on $\on{Diff}(\R^d)$ be given. Our goal is to define a metric on $\on{Emb}(M,\R^d)$ in the following way: fix an embedding $q_0 \in \on{Emb}(M,\R^d)$ and consider some other embedding $q = \ph \on{\circ} q_0$ in the orbit of $q_0$. Define the (semi-)norm of a tangent vector $h \in T_q \on{Emb}(M,\R^d)$ by
\[
G^{\on{Emb}}_q(h,h) = \inf_{X_\ph \on{\circ} q_0 = h} G^{\on{Diff}}_\ph(X_\ph, X_\ph)\,,
\]
with $X_\ph \in T_\ph\on{Diff}(\R^d)$. Intuitively we define the length of a tangent vector $h \in T_q \on{Emb}(M,\R^d)$ as the smallest length of a tangent vector $X_\ph$ inducing this infinitesimal deformation. If $\pi_{q_0}$ is the projection
\[
\pi_{q_0} : \on{Diff}(\R^d) \to \on{Emb}(M,\R^d)\,,\quad \pi_{q_0}(\ph) = \ph \on{\circ} q_0\,,
\]
then
\[
h = X_\ph \on{\circ} q_0 = T_\ph \pi_{q_0}.X_\ph\,,
\]
and the equation defining $G^{\on{Emb}}$ is the relation between two metrics that are connected by a Riemannian submersion,
\[
G^{\on{Emb}}_q(h,h) = \inf_{T_\ph\pi_{q_0}.X_\ph = h} G^{\on{Diff}}_\ph(X_\ph, X_\ph)\,.
\]
In fact the construction of $G^{\on{Emb}}_q$ depends neither on the diffeomorphism $\ph$ nor on the fixed embedding $q_0$. To see this note that
\[
X_\ph \circ q_0 = X_\ph \circ \ph\inv \circ \ph \circ q_0 = \left(X_\ph \circ \ph\inv\right) \circ q\,,
\]
and $X_\ph \circ \ph\inv \in T_{\on{Id}}\on{Diff}(\R^d)$. Hence we can write
\begin{equation}
\label{eq:def_G_emb_outer}
G_q^{\on{Emb}}(h, h) = \inf_{X \on{\circ} q = h} G^{\on{Diff}}_{\on{Id}}(X, X)\,,
\end{equation}
with $X \in T_{\on{Id}}\on{Diff}(\R^d)$. And this last equation depends only on $q$ and $h$.

One can show that the $G^{\on{Emb}}_q$ defined in this way is a positive semidefinite bilinear form. What is not obvious is that $G^{\on{Emb}}_q$ depends continuously or smoothly on $q$. This property and that it is positive definite, have to be checked in each example.
%We will see for landmark matching in Sect. \ref{sec_lm}, that even though the metric $G^{\on{Diff}}$ on $\on{Diff}(\R^d)$ is smooth, the induced metric on the landmark space $\mathcal{L}^n(\R^d)$ has only finitely many derivatives. 

Assuming that this construction yields a (smooth) Riemannian metric on the space $\on{Emb}(M,\R^d)$, then this metric is invariant 
under $\on{Diff}(\R^d)$, because
the left-action by $\on{Diff}(\R^d)$ commutes with the right-action by $\on{Diff}(M)$:
\begin{multline*}
G^{\on{Emb}}_{q\on{\circ}\ph}(h\on{\circ}\ph,h\on{\circ}\ph) 
= \!\!\!\!\!\!\inf_{X\on{\circ} q\on{\circ}\ph = h\on{\circ}\ph}\!\!\!\!\!\! G^{\on{Diff}}_{\on{Id}}(X, X)
= \inf_{X\on{\circ} q = h} G^{\on{Diff}}_{\on{Id}}(X, X) = G^{\on{Emb}}_{q}(h,h)\,.
\end{multline*}
The metric $G^{\on{Emb}}$ then can be expected to project to a Riemannian metric on $B_e(M,\R^d)$.

\subsection{Existence of optimal lifts} Consider the metric
\[
G_{\on{Id}}^{\on{Diff}}(X,Y) = \int_{\R^d} \left\langle
\left(\on{Id} - \De\right)^n X, Y \right\rangle \!\ud x\,,
\]
and set $LX = (\on{Id} - \De)^n$. For $h \in T_q\on{Emb}(M,\R^d)$, how should an $X \in \mf X(\R^d)$ satisfying $X \circ q = h$ and
\[
G_q^{\on{Emb}}(h,h) = G_{\on{Id}}^{\on{Diff}}(X,X)
\]
look like? It has to satisfy $G_{\on{Id}}^{\on{Diff}}(X,Y) = 0$ for all $Y$ with $Y \circ q \equiv 0$. In other words
\[
\int_{\R^d} \langle LX, Y \rangle \ud x = 0\,,\quad 
\forall Y \in \mf X(\R^d) \text{ with } Y \circ q \equiv 0\,.
\]
Because $q(M)$ is a set of positive codimension and hence zero measure, there exists \emph{no smooth} function $LX$ satisfying this and therefore there exists no smooth $X$ attaining the infimum in \eqref{eq:def_G_emb_outer}. To find an infimum we have to look in a bigger space of less regular functions, for example we have hope to succeed if we allow $LX$ to be a distribution supported on the set $q(M)$.

% This restriction to be supported on $q(M)$ is where the ansatz
% \[
% LX = \int_M p(y) \de_{q(y)} \ud y\,,
% \]
% with $\de(\cdot)$ being the Dirac $\de$-distribution, comes from. With $K(x,y)$ being the Green's function of the operator $L$, we can write $X$ as
% \[
% X(x) = \int_{M} K(x, q(y)) p(y) \ud y\,,
% \]
% for some function $p$.

\subsection{The RKHS point of view}

Let $(\mc H, \langle \cdot, \cdot \rangle_{\mc H})$ be a Hilbert space of vector fields, such that the canonical inclusions in the following diagram
\[
\mf X_{H^\infty}(\R^d) \hookrightarrow \mc H \hookrightarrow C^k_0(\R^d,\R^d)
\]
are bounded linear maps and $\mf X_{H^\infty}(\R^d)$ is dense in $\mc H$. We say that $\mc H$ is \emph{$k$-admissible}, if the inclusion $\mc H \hookrightarrow C^k_0$ is bounded. The motivation for the notion of $k$-admissible spaces of vector fields and their use to define groups of diffeomorphisms is explained in \cite{Younes2010}. 

The induced right-invariant metric on $\on{Diff}(\R^d)$ is
\[
G_{\ph}^{\on{Diff}}(X_\ph,Y_\ph) = \langle X_\ph\circ \ph\inv, Y_\ph\circ\ph\inv \rangle_{\mc H}\,,
\]
and the metric on $\on{Emb}(M,\R^d)$ is
\[
G^{\on{Emb}}_q(h,h) = \inf_{X \circ q = h} \langle X, X \rangle_{\mc H}\,.
\]

\begin{lemma*}
If the vector space $(\mc H, \langle \cdot, \cdot \rangle_{\mc H})$ is $0$-admissible, then the induced metric $G^{\on{Emb}}$ on $\on{Emb}(M,\R^d)$ is nondegenerate.
\end{lemma*}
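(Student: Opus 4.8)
The plan is to observe that $0$-admissibility is, by definition, the statement that the evaluation functionals are bounded on $\mc H$ — i.e. $\mc H$ is a reproducing kernel Hilbert space of vector fields — and that this immediately yields a pointwise lower bound for $\| X\|_{\mc H}$, and hence for $G^{\on{Emb}}_q(h,h)$, in terms of the values of $h$.

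First I would unwind what has to be shown. Nondegeneracy of $G^{\on{Emb}}$ means that for every $q \in \on{Emb}(M,\R^d)$ and every $h \in T_q\on{Emb}(M,\R^d) = C^\infty(M,\R^d)$ the equality $G^{\on{Emb}}_q(h,h) = 0$ forces $h = 0$. Recall that the infimum in
\[
G^{\on{Emb}}_q(h,h) = \inf_{X \circ q = h} \langle X, X\rangle_{\mc H}
\]
runs over a nonempty set: since $q$ is an embedding, $h$ induces via $q(x) \mapsto h(x)$ a smooth vector field along the compact submanifold $q(M)$, which extends to a compactly supported smooth vector field on $\R^d$ lying in $\mf X_{H^\infty}(\R^d) \subseteq \mc H$; in particular $G^{\on{Emb}}_q(h,h) < \infty$.

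Next, the key estimate. By $0$-admissibility the inclusion $\mc H \hookrightarrow C^0_0(\R^d,\R^d)$ is bounded; composing with the (manifestly bounded) evaluation $X \mapsto X(y)$ on $C^0_0$, we obtain a constant $C > 0$, independent of $y$, with $|X(y)| \leq C \| X\|_{\mc H}$ for all $X \in \mc H$ and all $y \in \R^d$. Now fix $q$ and $h$, and let $X \in \mc H$ be any vector field with $X \circ q = h$. Then for each $x \in M$ we have $|h(x)| = |X(q(x))| \leq C\|X\|_{\mc H}$. Taking the infimum over all admissible $X$ on the right and the supremum over $x \in M$ on the left gives
\[
\sup_{x \in M} |h(x)| \leq C \sqrt{G^{\on{Emb}}_q(h,h)}\,.
\]
If $G^{\on{Emb}}_q(h,h) = 0$, this forces $h(x) = 0$ for all $x \in M$, i.e. $h = 0$, which is exactly the asserted nondegeneracy.

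There is essentially no hard step here: the whole argument is the elementary observation that point evaluations are continuous on an admissible $\mc H$. The only points needing (routine) care are the nonemptiness of the index set of the infimum, so that the bound is not vacuous, and the harmless interchange of the infimum with the pointwise inequality. It is worth stressing that the admissibility hypothesis is used in an essential way: without control of $\mc H$ in a sup-type norm — e.g. if $\mc H$ merely embedded into $L^2$ — the evaluation functionals need not be bounded, and the induced metric can indeed degenerate, which is the mechanism behind the vanishing-geodesic-distance phenomena mentioned earlier.
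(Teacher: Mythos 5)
Your proof is correct and follows essentially the same route as the paper: both use $0$-admissibility to get the uniform bound $|h(x)| = |X(q(x))| \leq \|X\|_\infty \leq C\|X\|_{\mc H}$ for every lift $X$, then take the infimum to conclude $G^{\on{Emb}}_q(h,h) \geq C^{-2}|h(x)|^2 > 0$ whenever $h(x) \neq 0$. Your added remarks on the nonemptiness of the set of lifts and on why admissibility is essential are sensible but not needed for the core argument.
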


\begin{proof}
Let $h \in T_q\on{Emb}(M,\R^d)$ and $x \in M$, such that $h(x) \neq 0$. Then
\[
h(x) \leq \| h \|_{\infty} = \|X \circ q\|_{\infty} \leq \| X \|_\infty \leq C \sqrt{\langle X, X, \rangle_{\mc H}}\,.
\]
Since this holds for all $X \in \mf X(\R^d)$ with $X \circ q = h$, we conclude that
\[
G^{\on{Emb}}_q(h,h) \geq C^{-2} |h(x)|^2 > 0\,,
\]
and hence the metric is nondegenerate.
\end{proof}

\subsection{The horizontal subspace}
To compute an explicit expression for $G^{\on{Emb}}_q$ we decompose $\mc H$ into
\begin{align*}
\mc H^{\on{ver}}_q &= \{X\in \mc H \,:\, X \circ q  \equiv 0 \}\,, &
\mc H^{\on{hor}}_q &= \left(\mc H^{\on{ver}}_q \right)^\perp\,.
\end{align*}
Note that since $\mc H$ is $0$-admissible, $\mc H^{\on{ver}}_q$ is a closed subspace and hence $\mc H^{\on{ver}}_q \oplus \mc H^{\on{hor}}_q = \mc H$. Then the induced metric is given by
\[
G^{\on{Emb}}_q(h,h) = \langle X^{\on{hor}}, X^{\on{hor}} \rangle_{\mc H}\,,
\]
where $X \in \mf X(\R^d)$ is any vector field satisfying $X \circ q = h$ and $X^{\on{hor}} \in \mc H^{\on{hor}}_q$ is its horizontal projection. The horizontal projection does not depend on the choice of the lift, i.e. if $X, Y \in \mf X(\R^d)$ coincide along $q$, then $X - Y \in \mc H^{\on{ver}}_q$ and hence $X^{\on{hor}} = Y^{\on{hor}}$.

We have the maps
\[
\begin{array}{ccc}
T_q \on{Emb}(M,\R^d) & \to & \mc H^{\on{hor}}_q \\
h & \mapsto & X^{\on{hor}}
\end{array}\,,
\qquad
\begin{array}{ccc}
\mc H^{\on{hor}}_q & \to & C^k(M,\R^d) \\
X & \mapsto & X\circ q
\end{array}\,.
\]
The composition of these two maps is the canonical embedding $T_q\on{Emb}(M,\R^d) \hookrightarrow C^k(M,\R^d)$. Because $M$ is compact we do not have to distinguish between $C^k$ and $C^k_b$. Furthermore the equation $G^{\on{Emb}}_q(h,h) = \langle X^{\on{hor}}, X^{\on{hor}}\rangle_{\mc H}$ shows that the first map is an isometry between $(T_q \on{Emb}(M,\R^d), G^{\on{Emb}}_q)$ and $\mc H^{\on{hor}}_q$.

\begin{lemma*}
The image of $T_q \on{Emb}(M,\R^d)$ is dense in $\mc H^{\on{hor}}_q$ and the $G^{\on{Emb}}_q$-com\-ple\-tion of $T_q \on{Emb}(M,\R^d)$ can be identified with $\mc H^{\on{hor}}_q$.
\end{lemma*}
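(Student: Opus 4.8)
The plan is to identify the image of the isometry $h \mapsto X^{\on{hor}}$ with the orthogonal projection of $\mf X_{H^\infty}(\R^d)$ onto $\mc H^{\on{hor}}_q$, and then to use that this projection is a bounded linear surjection, so it carries the dense subspace $\mf X_{H^\infty}(\R^d) \subseteq \mc H$ to a dense subspace of $\mc H^{\on{hor}}_q$. Since $\mc H^{\on{hor}}_q$ is a closed subspace of the Hilbert space $\mc H$, it is complete, and a dense isometric image inside it is exactly what is needed to identify it with the completion.

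First I would recall that $\on{Emb}(M,\R^d)$ is open in $C^\infty(M,\R^d)$, so that $T_q\on{Emb}(M,\R^d) = C^\infty(M,\R^d)$, and that every such $h$ admits a lift $X \in \mf X_{H^\infty}(\R^d)$ with $X \circ q = h$ (extend $h$ off $q(M)$ using a tubular neighborhood and a cutoff function); this is precisely the fact already used when the formula $G^{\on{Emb}}_q(h,h) = \langle X^{\on{hor}}, X^{\on{hor}}\rangle_{\mc H}$ was written down. Conversely, each $X \in \mf X_{H^\infty}(\R^d)$ produces $h := X \circ q \in T_q\on{Emb}(M,\R^d)$ whose associated horizontal vector is $X^{\on{hor}}$, because the horizontal projection depends only on the restriction $X \circ q$ and not on the chosen lift. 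Hence the image of $h \mapsto X^{\on{hor}}$ equals $P^{\on{hor}}\bigl(\mf X_{H^\infty}(\R^d)\bigr)$, where $P^{\on{hor}} : \mc H \to \mc H^{\on{hor}}_q$ is orthogonal projection; this is well defined and bounded because $\mc H$ is $0$-admissible and therefore $\mc H^{\on{ver}}_q$ is closed.

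Next I would establish density. Since $\mf X_{H^\infty}(\R^d)$ is dense in $\mc H$ by hypothesis and $P^{\on{hor}}$ is a bounded surjection, $P^{\on{hor}}\bigl(\mf X_{H^\infty}(\R^d)\bigr)$ is dense in $\mc H^{\on{hor}}_q$; equivalently, one argues directly that if $Z \in \mc H^{\on{hor}}_q$ satisfies $\langle Z, X^{\on{hor}} \rangle_{\mc H} = 0$ for all $X \in \mf X_{H^\infty}(\R^d)$, then, writing $X = X^{\on{ver}} + X^{\on{hor}}$ and using $Z \perp \mc H^{\on{ver}}_q$, one gets $\langle Z, X \rangle_{\mc H} = \langle Z, X^{\on{hor}} \rangle_{\mc H} = 0$ for all $X$ in a dense set, so $Z = 0$.

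Finally, combining this with the preceding lemma — that $h \mapsto X^{\on{hor}}$ is an isometry of $(T_q\on{Emb}(M,\R^d), G^{\on{Emb}}_q)$ onto its image in $\mc H^{\on{hor}}_q$ — we have an isometric embedding with dense image into a complete space, which by the uniqueness of completions identifies $\mc H^{\on{hor}}_q$ with the $G^{\on{Emb}}_q$-completion of $T_q\on{Emb}(M,\R^d)$. I do not expect a serious obstacle here; the only points requiring a little care are the existence of an $H^\infty$-lift of an arbitrary smooth $h$, handled by the standard tubular-neighborhood-plus-cutoff construction, and the independence of the horizontal projection from the choice of lift, which is already implicit in the preceding discussion.
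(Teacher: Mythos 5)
Your proposal is correct and follows essentially the same route as the paper: the paper's proof also takes $X \in \mc H^{\on{hor}}_q$, approximates it by a sequence $X_n \in \mf X_{H^\infty}(\R^d)$ dense in $\mc H$, and uses the bound $\| X_n^{\on{hor}} - X^{\on{hor}} \|_{\mc H} \leq \| X_n - X\|_{\mc H}$, which is exactly your observation that the orthogonal projection is a bounded (indeed contractive) surjection carrying a dense subspace to a dense subspace. Your additional remarks on the existence of $H^\infty$-lifts and on identifying the completion via a dense isometric image in the complete space $\mc H^{\on{hor}}_q$ are details the paper leaves implicit, and they are handled correctly.
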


\begin{proof}
It is enough to show that the image is dense. Given $X \in \mc H^{\on{hor}}_q$ choose a sequence $X_n \in \mf X_{H^\infty}(\R^d)$ converging to $X$ in $\mc H$. Then $X_n^{\on{hor}}$ is the image of $X_n \circ q \in T_q \on{Emb}(M,\R^d)$ in $\mc H^{\on{hor}}_q$ and
\[
\left\| X^{\on{hor}}_n - X \right\|_{\mc H} = \left\| X^{\on{hor}}_n - X^{\on{hor}} \right\|_{\mc H}
\leq \left\| X_n - X \right\|_{\mc H} \to 0\,.
\]
Hence the image is dense.
\end{proof}

A consequence of this lemma is that the $G^{\on{hor}}_q$-completion of $T_q \on{Emb}(M,\R^d)$ can be identified with a closed subspace of a RKHS and as such it is itself an RKHS. Since the norm $G^{\on{Emb}}_q$ is defined using the infimum $G^{\on{Emb}}_q(h,h) = \langle X^{\on{hor}}, X^{\on{hor}}\rangle_{\mc H}$, it follows from \cite[Thm.~I.5]{Aronszajn1950} that its reproducing kernel is given by restricting the kernel $K$ of $\mc Hx$ to $M$; i.e.,
\[
K_q : M \x M \to \R^{d \x d}\,,\quad K_q(x,y) = K(q(x),q(y))\,,
\]
is the reproducing kernel of the induced innner product on $T_q \on{Emb}(M,\R^d)$.

\appendix

\section{$\on{Diff}(M)$ as a Lie group}
Having discussed infinite-dimensional Riemannian manifolds, let us briefly look at an infinite-dimensional Lie group and some of its properties.

When $M$ is a manifold we can consider
\[
\on{Diff}(M) = \{ \ph \in C^\infty(M,M) \,:\, \ph \text{ bijective},\, \ph\inv \in C^\infty(M,M) \}\,,
\]
and $\mf X(M)$, the space of vector fields on $M$. Intuitively we would like to see $\on{Diff}(M)$ as a Lie group with $\mf X(M)$ as its Lie algebra and the Lie group exponential map being the time $1$ flow map of the vector field,
\[
\on{exp}: \mf X(M) \to \on{Diff}(M)\,,\quad u \mapsto \ph(1)\,,
\]
where $\ph(t)$ is the solution of the ODE
\[
\p_t \ph(t, x) = u(\ph(t,x))\,,\quad \ph(0,x) = x\,.
\]

This runs into several difficulties. First, \emph{$\on{exp}$ might not be well-defined}. Consider $M=\R$ and the vector field
\[
u(x) = x^2 \frac{\p}{\p x}\,.
\]
Its flow is given by
\[
\ph(t,x) = \frac{x}{1-tx}\,,
\]
and we see that the vector field $u$ is not complete. As a consequence $\ph(1,x)$ is defined only for $x < 1$. This problem can be avoided either by restricting to compact manifolds $M$ or by requiring vector fields to decay sufficiently rapidly towards infinity. But even then the exponential map exhibits some unexpected behaviour.

The exponential map \emph{is not locally surjective}. Consider an element $\ph \in \on{Diff}(S^1)$, such that $\ph$ has no fixed points and at least one isolated periodic point\footnote{A point $x \in S^1$, such that $\ph^n(x) = x$ for some $n \in \mb N_{>0}$.}. Assume that we can write $\ph = \on{exp}(u)$ for some $u \in \mf X(S^1)$. Then $u$ mus satisfy $u(x) \neq 0$ for all $x \in S^1$. Suprisingly, we can now show that $\ph$ is conjugate to a rotation. To see this define the diffeomorphism
\[
\et(x) = c \int_0^x \frac{\mathrm{d}y}{u(y)}\,,\quad
c = 2\pi \left(\int_{S^1} \frac{\mathrm{d}x}{u(x)} \right)\inv\,.
\]
It is easy to check that $\et \circ \ph \circ \et\inv$ is a rotation by calculating the derivative $\p_t\left(\et \circ \on{exp}(tu) \circ \et\inv\right)$. Now let $\et \circ \ph = R_\al \circ \et$, where $R_\al(x) = x + \al \mod 2\pi$ is a rotation. Then $\et \circ \ph^n = R_{n\al} \circ \et$. Let $x_0$ be the isolated periodic point such that $\ph^n(x_0) = x_0$. Then $\et(x_0) = R_{n\al}( \et(x_0))$, which implies that $R_{n\al} = \on{Id}_{S^1}$ and thus $\ph^n = \on{Id}_{S^1}$, which contradicts the assumption that $x_0$ is an isolated periodic point. Thus $\ph$ cannot be written as $\ph = \on{exp}(u)$ for any $u \in \mf X(S^1)$.
%Then $\ph^n = \on{exp}(nu)$. Let $x_0$ be the isolated periodic point. Then $\ph^n(x_0) = x_0$ implies $nu(x_0) = 0$ and hence $u(x_0) = 0$. But this would mean $\ph(x_0) = x_0$, contradicting the fact that $\ph$ has no fixed points. 
An example of such a $\ph$ is given by
\[
\ph(x) = x + \frac{2\pi}{n} + \ep \sin (nx)\,,
\]
where we can choose $n \in \mb N$ and $|\ep| < 2/n$. In particular, by choosing $n$ large and $\ep$ small, $\ph$ will be arbitrary close to the identity. This counterexample can be found in \cite[I.5.5.2]{Hamilton1982} and \cite[p.1017]{Milnor1984}. One can show more.

\begin{theorem*}[Grabowski, 1988 \cite{Grabowski1988}]
Given a $C^n$-manifold $M$, there exists a continuous curve $\ga : [0, 1) \to \on{Diff}^n_c(M)$, $\ga(0) = \on{Id}$, such that $\{ \ga(t) \,:\, t \in (0,1)\}$ is a set of free generators of a subgroup of $\on{Diff}^n_c(M)$, which contains only (apart from the identity) diffeomorphisms that are not in the image of the exponential map.
\end{theorem*}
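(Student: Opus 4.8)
The plan is to carry out the whole construction inside one coordinate ball and then transplant it. Fix a chart of $M$ identifying an open subset with $\R^n$, $n=\dim M$, together with a ball $B$ contained in it; since the diffeomorphisms of $M$ supported in (the image of) $B$ form a subgroup of $\on{Diff}^n_c(M)$, it is enough to produce a continuous curve $\ga\colon[0,1)\to\on{Diff}^n_c(M)$ with $\ga(0)=\on{Id}$, all of whose values are supported in $B$, such that $\{\ga(t):t\in(0,1)\}$ freely generates a subgroup whose nontrivial elements all lie outside the image of $\exp$.

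The engine is a local strengthening of the circle counterexample above. Suppose $\ph=\exp(u)$ for a compactly supported vector field $u$ on $M$, with flow $(\Phi_s)$, so that $\ph=\Phi_1$ and $\ph^m=\Phi_m$, and suppose $p$ is isolated in $\on{Fix}(\ph^m)$ for some $m\geq1$. If $u(p)\neq0$, then the flow line through $p$ is an embedded circle on which $u$ is nowhere zero and which $\Phi_m$ fixes pointwise, contradicting the isolation of $p$; hence $u(p)=0$, so $\ph(p)=p$ and $D\ph(p)=\exp(Du(p))$ lies in the image of $\exp\colon\mf{gl}_n(\R)\to\on{GL}_n(\R)$. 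Consequently a diffeomorphism fails to be in the image of the group exponential as soon as it has either an isolated periodic point of period $\geq2$, or an isolated fixed point whose derivative is not a matrix exponential --- for $n\geq2$ one may take, for instance, a derivative with two distinct negative eigenvalues. Both behaviours are realised by compactly supported $C^n$ diffeomorphisms of a ball, and the second survives inversion; call a diffeomorphism \emph{obstructed} if it possesses such an isolated fixed or periodic point.

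It then suffices to build $\ga$ so that every nontrivial reduced word $w=\ga(t_1)^{\ep_1}\cdots\ga(t_k)^{\ep_k}$ (with the $t_i$ distinct and no adjacent cancellation) is obstructed: an obstructed diffeomorphism is non-exponential and in particular not the identity, so this yields simultaneously the freeness of the generating family and the non-exponentiality of every nontrivial element. I would achieve it by a ping-pong construction. Inside $B$ one distributes a family of pairwise disjoint sub-balls indexed by the nodes of an infinite tree of finite strings, shrinking to a point fast enough that assembling local models over all of them still produces a $C^n$ diffeomorphism; each $\ga(t)$ then acts on a selection of these sub-balls --- determined continuously by $t$, and trivially for $t=0$ --- by rescaled copies of a fixed model germ chosen to be obstructed. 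The tree and the selection rule must be arranged so that the standard ping-pong (Schottky) argument applies, so that no nontrivial reduced word acts as the identity, and, simultaneously, so that every nontrivial reduced word restricts on at least one sub-ball to a single, un-iterated copy of the model germ or of its inverse, and is therefore obstructed there. Crucially, this must be made to cover words such as commutators, in which no generator occurs with nonzero total exponent; it is exactly this requirement that defeats the naive ``one sub-ball per generator'' assignment, which would make the generators commute.

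The heart of the matter is this simultaneous bookkeeping: arranging the tree of sub-balls and the placement of the model germs so that, for a \emph{continuum} of generators at once, the curve is continuous with $\ga(0)=\on{Id}$, distinct parameters are independent enough to run ping-pong, and \emph{every} reduced word --- commutators included --- localises somewhere to a single obstructed germ. This forces a recursive, fractal-like organisation of the supports, and it is here, rather than in the reduction or in the obstruction lemma, that the real work lies; along the way one must also check the uniform estimates that let the accumulating rescaled germs assemble into a genuine $C^n$ diffeomorphism, and, when $n=1$, use the ``isolated periodic point'' form of the obstruction throughout, the ``forbidden derivative'' form being vacuous in dimension one.
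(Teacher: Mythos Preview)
The paper does not prove this theorem; it merely states it, with attribution to Grabowski, immediately after the circle example and then moves on to non-injectivity of $\exp$. So there is no ``paper's own proof'' to compare against.

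Judged on its own terms, your sketch points in the right direction. The obstruction lemma is correct: if $\ph=\exp(u)$ with $u$ compactly supported and $p$ is an isolated fixed point of some iterate $\ph^m$, then $u(p)=0$ --- otherwise the flow orbit through $p$ is an embedded circle on which $\ph^m$ is a rotation, contradicting isolation --- and hence $D\ph(p)=\exp(Du(p))$ lies in the image of the matrix exponential. This is indeed the mechanism behind Grabowski's argument, and the idea of planting rescaled copies of an obstructed germ on a fractal family of disjoint sub-balls, then using a ping-pong argument, is the right picture.

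That said, your proposal is a roadmap rather than a proof. You explicitly identify the crux --- arranging the tree of sub-balls and the parameter-dependent selection so that the curve is continuous with $\ga(0)=\on{Id}$, distinct parameters are free, and \emph{every} nontrivial reduced word (commutators included) localises to a single obstructed germ --- and then stop. These requirements are genuinely in tension (continuity of $\ga$ pushes nearby $\ga(t)$ to have nearly identical supports, while freeness wants them independent), and resolving that tension, together with the $C^n$ estimates guaranteeing that the infinite assembly is still a diffeomorphism, is precisely the content of \cite{Grabowski1988}. As written, the proposal names the difficulty without overcoming it.
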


Here $\on{Diff}^k_c(M)$ denotes the group of \emph{compactly supported} $C^n$-diffeomorphisms, i.e., $\ph$ is compactly supported if the set $\{ x \in M \,:\, \ph(x) \neq x \}$ has compact closure.

Finally the exponential map is \emph{not locally injective} either. Let $\ps \in \on{Diff}(S^1)$ be a $2\pi/n$-periodic diffeomorphism, i.e., $\ps(x + 2\pi/n) = \ps(x) + 2\pi/n$. Denote by $R_\al(x) = x + \al \!\!\mod 2\pi$ the rotation by $\al$. Then $R_{2\pi t/n}$ lies in the $1$-parameter subgroup $\ph(t) = \ps \circ R_{2\pi t/n} \circ \ps\inv$. In other words, define the vector field
\[
u(x) = \frac{2\pi}{n} \ps'(\ps\inv(x)) \frac{\p}{\p x}\,.
\]
Its flow is
\[
\ph(t,x) = \ps\left(\ps\inv(x) + \frac{2\pi}{n}t\right)\,,
\]
and $\on{exp}(u) = \ph(1) = R_{2\pi t/n}$. Since $\ps$ can be chosen to be arbitrary close to the identity, $\on{exp}$ cannot be locally injective.

\printbibliography

\end{document}